\newtheorem{thm}{Theorem}[section]
\newtheorem{lem}[thm]{Lemma}
\newtheorem{prop}[thm]{Proposition}
\theoremstyle{definition}
\newtheorem{ex}[thm]{Example}
\theoremstyle{remark}
\newtheorem{remark}[thm]{Remark}
\newtheorem{remarks}[thm]{Remarks}
\newtheorem{notation}[thm]{Notation}
\newcommand{\CA}{{\mathcal{A}}}
\newcommand{\bE}{{\overline{\mathcal{E}}}}
\newcommand{\CL}{{\mathcal{L}}}
\newcommand{\CB}{{\mathcal{B}}}
\newcommand{\CO}{{\mathcal{O}}}
\newcommand{\af}{\alpha}
\newcommand{\bt}{\beta}
\newcommand{\gm}{\gamma}
\newcommand{\dt}{\delta}
\newcommand{\ld}{\lambda}
\newcommand{\sm}{\sigma}
\newcommand{\om}{\omega}
\begin{document}


\title[Finite simple labeled graph 
 $C^*$-algebras  of Cantor minimal subshifts]
{Finite simple labeled graph $C^*$-algebras 
 of Cantor minimal subshifts}

\author[J. A. Jeong]{Ja A Jeong$^{\dagger}$}
\thanks{Research partially supported by 
             NRF-2012R1A1A2008160$^{\dagger}$ and NRF-2015R1C1A2A01052516$^{\dagger}$}
\thanks{Research partially supported by Hanshin University$^{\ddagger}$}
\thanks{Research partially supported by BK21 PLUS SNU Mathematical Sciences Division$^{*}$}
\address{
Department of Mathematical Sciences and Research Institute of Mathematics\\
Seoul National University\\
Seoul, 151--747\\
Korea} \email{jajeong\-@\-snu.\-ac.\-kr }

\author[E. J. Kang]{Eun Ji Kang$^{\dagger}$}
\address{
Department of Mathematical Sciences and Research Institute of Mathematics\\
Seoul National University\\
Seoul, 151--747\\
Korea} \email{kkang33\-@\-snu.\-ac.\-kr }

\author[S. H. Kim]{Sun Ho Kim$^{\dagger, *} $}
\address{
BK21 Plus Mathematical Sciences Division\\
Seoul National University\\
Seoul, 151--747\\
Korea} \email{sunho.kim.math\-@\-gmail.\-com }

\author[G. H. Park]{Gi Hyun Park$^{\ddagger}$}
\address{
Department of Financial Mathematics\\
Hanshin University\\
Osan, 447--791\\
Korea} \email{ghpark\-@\-hs.\-ac.\-kr }

\keywords{labeled graph $C^*$-algebra, finite $C^*$-algebra, Cantor minimal system}

\subjclass[2000]{46L05, 46L55, 37A55} 

\begin{abstract}   
It is now well known that a simple graph $C^*$-algebra 
$C^*(E)$ of a directed graph $E$ is either AF or purely infinite. 
In this paper, we address the question of whether this is the case 
for labeled graph $C^*$-algebras  recently introduced by Bates and Pask 
as one of the generalizations of graph $C^*$-algebras, and    
show that 
there exists a family of simple unital labeled graph $C^*$-algebras 
which are neither  AF nor purely infinite.
Actually these algebras are shown to be isomorphic to crossed products 
$C(X)\times_T \mathbb Z$ where  
the dynamical systems $(X,T)$ are Cantor minimal subshifts. 
Then it is an immediate consequence of well known results about 
this type of crossed products that each labeled graph $C^*$-algebra  
in the family obtained here is an $A\mathbb T$ algebra   
with real rank zero and has $\mathbb Z$ as its $K_1$-group.
\end{abstract}

\maketitle

\setcounter{equation}{0}

\section{Introduction}

\noindent

With the motivation to provide a common framework for studying 
the ultragraph $C^*$-algebras  
(\cite{To1, To2}) and 
the shift space $C^*$-algebras   
(see \cite{Ca, CM, Ma} among others), Bates and Pask \cite{BP1}
introduced the $C^*$-algebras associated to labeled graphs (more 
precisely, labeled spaces). 
Graph $C^*$-algebras 
(see \cite{BHRS, BPRS, KPR, KPRR, R} among many others) 
and Exel-Laca algebras \cite{EL} are 
ultragraph $C^*$-algebras and all these 
algebras are defined as universal objects 
generated by partial isometries 
and projections 
satisfying certain relations 
determined by graphs (for graph $C^*$-algebras), ultragraphs 
(for ultragraph $C^*$-algebras), 
and infinite matrices (for Exel-Laca algebras). 
In a similar but more complicated manner, 
a labeled graph $C^*$-algebra 
$C^*(E,\CL, \CB)$ is also defined as a   
$C^*$-algebra generated by partial isometries 
$\{s_a:a\in \CA\}$ and  projections $\{p_A:A\in \CB\}$, 
where $\CA$ is an alphabet 
onto which a {\it labeling map} $\CL:E^1\to \CA$ is 
given from the edge set $E^1$ of the directed graph $E$,  
and $\CB$, an {\it accommodating set}, 
is a set of vertex subsets $A\subset E^0$ satisfying certain conditions. 
The family of these generators is assumed to obey a set of rules 
regulated by the triple $(E,\CL,\CB)$ called a {\it labeled space} 
and moreover it should be universal in the sense 
that any $C^*$-algebra generated by 
a family of partial isometries and 
projections satisfying the same rules must be a quotient 
algebra of 
$C^*(E, \CL,\CB)$. 
The universal property allows the group $\mathbb T$ 
to act on $C^*(E,\CL,\CB)$ in a canonical way, and 
this action $\gm$ (called the {\it gauge action})
plays an important role throughout the study of generalizations of 
the Cuntz-Krieger algebras. 
The Cuntz-Krieger algebras \cite{CK} 
(and the Cuntz algebras \cite{C81})
are the $C^*$-algebras of finite graphs from which 
many generalizations have emerged in various ways including 
the $C^*$-algebras of higher-rank graphs whose study started in 
\cite{KP}.

Simplicity and pure infiniteness results for 
labeled graph $C^*$-algebras are obtained in \cite{BP2}, 
and particularly 
it is shown that there exists a purely infinite simple 
labeled graph $C^*$-algebra which is not 
stably isomorphic to any graph $C^*$-algebras. 
Thus we can say that the class of simple labeled graph $C^*$-algebras 
is strictly larger than that of simple graph $C^*$-algebras. 
As is shown in \cite{To2}, every simple ultragraph $C^*$-algebra 
is either AF or purely infinite, whereas      
we know from \cite{PRRS} that  
among higher rank graph $C^*$-algebras   
there exist simple $C^*$-algebras which are  
neither AF nor purely infinite, 
more specifically there exist such simple $C^*$-algebras which are  
stably isomorphic to irrational rotation algebras or Bunce-Deddens algebras. 
These examples of finite (but non-AF) simple $C^*$-algebras 
associated to higher rank graphs raise a natural question 
of whether 
there exist labeled graph $C^*$-algebras that 
are simple finite but non-AF.   
The purpose of this paper is to answer this question positively 
by providing a family of such simple  
labeled graph $C^*$-algebras. 
The  $C^*$-algebras in this family  are 
$A\mathbb T$-algebras (limit circle algebras) with traces  
that are isomorphic to 
crossed products $C(X)\times_T \mathbb Z$ 
of Cantor minimal systems $(X,T)$,  
where the compact metric spaces $X$ are subshifts 
over finite alphabets.

 A dynamical system $(X,T)$ consists of 
 a compact metrizable space $X$ and a 
 transformation $T:X\to X$ which is a homeomorphism. 
 This determines a $C^*$-dynamical system 
 $(C(X), \mathbb Z, T)$ where 
 $T(f):=f\circ T^{-1}$, $f\in C(X)$ and thus 
 gives rise to the crossed product 
 $C(X)\times_T \mathbb Z$.   
 If two dynamical systems $(X_i,T_i)$, $i=1,2$, are topologically 
 conjugate, namely if there is an homeomorphism $\phi: X_1\to X_2$ 
 satisfying  
 $T_2(\phi(x))=\phi(T_1(x))$ for all $x\in X$, then 
 it is rather obvious that the crossed products are 
 isomorphic. 
As a consequence of the Markov-Kakutani fixed point theorem, 
one can show that there exists a Borel probability measure $m$ on $X$ 
which is $T$-invariant in the sense that 
$m\circ T^{-1}=m$ (for example, see \cite[Theorem VIII. 3.1]{Da}). 
If there exists a unique $T$-invariant measure, we call 
$(X,T)$ {\it uniquely ergodic}.   
 If $X$ is the only non-empty closed $T$-invariant subspace of $X$, the system 
 $(X,T)$ is said to be {\it minimal}, and  
 as is well known, a dynamical system $(X,T)$ is minimal 
 if and only if  
 each $T$-orbit $\{T^ix: i\in \mathbb Z\}$, $x\in X$, 
 is dense in $X$. 
 A  Cantor space  is characterized as a compact metrizable 
 totally disconnected space with no isolated points,  
 and a dynamical system $(X,T)$ on a Cantor space $X$ 
 is called a {\it Cantor system}.
The family of Cantor minimal systems is important for the study  
of whole minimal dynamical systems in view of the fact that 
every minimal system is a factor of a Cantor minimal system  
(see \cite[Section 1]{GPS}). 

For a dynamical system $(X,T)$ on an infinite space $X$,  
the crossed product $C(X)\times_{T} \mathbb Z$  
is well known to be simple exactly when 
the system $(X,T)$ is minimal. 
In particular, if $(X,T)$ is a minimal dynamical system on a Cantor space 
$X$, this simple crossed product turns out to be an $A\mathbb T$-algebra, 
an inductive limit of finite direct sums of 
matrix algebras over $\mathbb C$ or  $C(\mathbb T)$ 
 (for example, see \cite[Chapter VIII]{Da}). 
It should be noted here 
that these simple crossed products 
$C(X)\times_{T} \mathbb Z$ of Cantor minimal systems are never
AF since their $K_1$ groups are 
all equal to $\mathbb Z$, hence nonzero (\cite[Theorem 1.4]{HPS}). 

For a finite alphabet $\CA$ ($|\CA |\geq 2$), 
the set $\CA^{\mathbb Z}$ of all two-sided infinite sequences 
becomes a compact metrizable space in the product topology 
and forms a dynamical system $(\CA^{\mathbb Z}, T)$ 
together with the shift transformation $T$ given by 
$T(\om)_i:=\om_{i+1}$, 
$\om\in \CA^{\mathbb Z}$, $i\in \mathbb Z$. 
If $X\subset \CA^{\mathbb Z}$ is a $T$-invariant closed subspace, 
we call the dynamical system $(X,T)$ a {\it subshift} of $(\CA^{\mathbb Z}, T)$. 
For  a sequence $\om\in \CA^{\mathbb Z}$, let $\CO_\om$ denote 
the the closure of the $T$-orbit of $\om$. 
Then, as is well known, the subshift $(\CO_\om, T)$ 
becomes a Cantor minimal system 
whenever $\om$ is an almost periodic and aperiodic sequence. 

In order to explain how we form a labeled space 
from a  Cantor minimal subshift $(\CO_\om, T)$, 
let $E_{\mathbb Z}$ be the directed graph 
with the vertex set $\{v_n:n\in \mathbb Z\}$ and 
the edge set $\{ e_n: n\in \mathbb Z\}$, where 
each $e_n$ is an arrow from $v_n$ to $v_{n+1}$, $n\in \mathbb Z$. 
Then we consider a labeling map 
$\CL_\om$ on the graph $E_{\mathbb Z}$ which assigns 
to an edge $e_n$ a letter $\om_n$  for each $n\in \mathbb Z$. 
In this way we obtain a labeled graph $C^*$-algebra 
$C^*(E_{\mathbb Z}, \CL_\om, \bE_{\mathbb Z})$, where 
$\bE_{\mathbb Z}$ is the smallest set  amongst the normal 
accommodating sets. 
Then we first show that these unital labeled graph algebras 
$C^*(E_{\mathbb Z}, \CL_\om, \bE_{\mathbb Z})$ are all simple 
and have traces. 
In the simple crossed product $C(\CO_\om)\times_T \mathbb Z$, 
we then find a family of partial isometries   
and projections satisfying the same relations 
required for the canonical generators of 
$C^*(E_{\mathbb Z}, \CL_\om, \bE_{\mathbb Z})$, 
which proves from universal property of 
$C^*(E_{\mathbb Z}, \CL_\om, \bE_{\mathbb Z})$ 
that there exists an isomorphism  
of $C^*(E_{\mathbb Z}, \CL_\om, \bE_{\mathbb Z})$  to 
the crossed product $C(\CO_\om)\times_T \mathbb Z$. 
Our results can be summarized as follows: 
\vskip 1pc 

\begin{thm} (Theorem~\ref{main thm 1} and Theorem~\ref{main thm 2})
Let $\CA$ be a finite alphabet with $|\CA|\geq 2$, 
and let $\om\in \CA^{\mathbb Z}$ be a sequence such that 
the subshift $(\CO_\om,T)$ is a Cantor minimal  system. 
If $\CL_\om$ is a labeling map on the graph $E_{\mathbb Z}$ 
by the sequence $\om$,  the labeled graph $C^*$-algebra 
$C^*(E_{\mathbb Z}, \CL_\om,\bE_{\mathbb Z})$ is a non-AF simple unital  $C^*$-algebra. 
Moreover there is an isomorphism   
$$\pi: C^*(E_{\mathbb Z}, \CL_\om,\bE_{\mathbb Z})\to 
C(\CO_\om) \times_{T} \mathbb Z$$ 
such that the restriction of $\pi$ onto 
the fixed point algebra $C^*(E_{\mathbb Z}, \CL_\om,\bE_{\mathbb Z})^\gm$
of the gauge action $\gm$ is an isomorphism onto 
$C(\CO_\om)$.
\end{thm}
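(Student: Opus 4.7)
The plan follows the three-step strategy outlined in the introduction: first show that $C^*(E_{\mathbb Z}, \CL_\om, \bE_{\mathbb Z})$ is simple and unital, then construct the isomorphism onto $C(\CO_\om)\times_T\mathbb Z$ via the universal property of labeled graph algebras, and finally deduce the non-AF assertion from well known $K$-theory.

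For simplicity and unitality, I would verify that the labeled space $(E_{\mathbb Z}, \CL_\om, \bE_{\mathbb Z})$ is set-finite, receiver set-finite, and weakly left-resolving; these are straightforward because every vertex $v_n$ has exactly one outgoing and one incoming edge. Unitality follows because $E^0 = \bigsqcup_{a\in \CA} r(a) \in \bE_{\mathbb Z}$, so the projection $p_{E^0}$ is a unit. For simplicity I would invoke the Bates--Pask criterion from \cite{BP2}: cofinality translates, under the identification $v_n \leftrightarrow T^n\om$, to the statement that every factor of $\om$ recurs in both directions, which is equivalent to minimality of $(\CO_\om, T)$; and the disagreeability condition (the labeled-space analogue of aperiodicity of the graph) follows from aperiodicity of the sequence $\om$.

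The heart of the proof is the construction of a Cuntz--Krieger $(E_{\mathbb Z}, \CL_\om, \bE_{\mathbb Z})$-family inside the crossed product. Let $u$ denote the canonical unitary implementing $T$ in $C(\CO_\om)\times_T\mathbb Z$, and for each $a\in \CA$ set
$$S_a := u\cdot \mathbf{1}_{C(a)},\qquad C(a) := \{x\in\CO_\om : x_0 = a\},$$
where each $C(a)$ is clopen. The key technical lemma would attach to every $A\in \bE_{\mathbb Z}$ a clopen subset $\wt A \subset \CO_\om$, via the map sending a range $r(\alpha)$ of a labeled path $\alpha$ to the corresponding cylinder, in such a way that relative ranges $r(A,a)$ go to shifts of intersections $\wt A \cap C(a)$. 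Setting $P_A := \mathbf{1}_{\wt A}$, the labeled Cuntz--Krieger relations reduce to routine computations with characteristic functions and the covariance $u f u^* = f\circ T^{-1}$. The universal property then produces a $*$-homomorphism $\pi\colon C^*(E_{\mathbb Z}, \CL_\om, \bE_{\mathbb Z}) \to C(\CO_\om)\times_T\mathbb Z$, which is nonzero since $\pi(p_{E^0}) = 1$, surjective because $u = \sum_{a\in\CA} S_a$ and the $P_A$ generate $C(\CO_\om)$, and injective by the simplicity proved in the previous step.

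For the gauge statement, the dual action on $C(\CO_\om)\times_T\mathbb Z$ defined by $z\cdot u = zu$ and $z\cdot f = f$ for $f\in C(\CO_\om)$ is intertwined with the gauge action $\gm$ under $\pi$, so $\pi$ restricts to an isomorphism of fixed-point algebras $C^*(E_{\mathbb Z}, \CL_\om, \bE_{\mathbb Z})^\gm \cong C(\CO_\om)$. The non-AF conclusion then follows immediately from $K_1(C(\CO_\om)\times_T\mathbb Z) \cong \mathbb Z$ (\cite[Theorem 1.4]{HPS}). \textbf{Main obstacle.} The delicate step is the explicit description of $\bE_{\mathbb Z}$ as a family of clopen subsets of $\CO_\om$ and the verification of the labeled Cuntz--Krieger relation of the form $p_A = \sum_a s_a p_{r(A,a)} s_a^*$ on the crossed product side: this requires an inductive characterization of the elements of $\bE_{\mathbb Z}$ generated by ranges of labeled paths together with the closure operations built into the normal accommodating set, and a careful matching of these combinatorial operations with shift-cylinder operations in $\CO_\om$.
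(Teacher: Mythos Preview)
Your overall architecture matches the paper's: establish simplicity of the labeled graph algebra, exhibit a representation of the labeled space inside the crossed product, invoke the universal property and simplicity to get an isomorphism, and read off non-AF from $K_1\cong\mathbb Z$. The paper even sidesteps what you flag as the ``main obstacle'': it first proves directly that the gauge-fixed-point algebra is isomorphic to $C(\CO_\om)$ via $s_\af p_{r(\bt\af)}s_\af^*\mapsto\chi_{[\bt.\af]}$, and then constructs the partial isometries inside $C^*(E_{\mathbb Z},\CL_\om,\bE_{\mathbb Z})^\gm\rtimes_{T'}\mathbb Z$, where the projections $p_A$ already live. Since each generalized vertex $[v]_l$ equals some $r(\af)$ for this particular graph, no separate inductive description of $\bE_{\mathbb Z}$ is needed.

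There is, however, a genuine slip in your formula $S_a=u\,\mathbf 1_{C(a)}$: the unitary is on the wrong side. With your covariance convention $ufu^*=f\circ T^{-1}$ and $C(a)=\{x:x_0=a\}$, the relation $S_a^*S_a=P_{r(a)}$ forces $P_{r(a)}=\chi_{C(a)}$, and then $P_A S_a=S_a P_{r(A,a)}$ forces inductively $P_{r(\af)}=\chi_{[.\af^R]}$, the cylinder of the \emph{reversed} word $\af^R$. Relation (iv) for $A=r(\af)$ then reads
\[
\chi_{[.\af^R]}\;=\;\sum_{a:\ \af a\in L_\om}\chi_{[a.\af^R]}\qquad\text{in }C(\CO_\om),
\]
whereas the honest decomposition of $[.\af^R]\cap\CO_\om$ is indexed by those $a$ with $a\af^R\in L_\om$. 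The two index sets coincide only when $L_\om$ is closed under reversal, which fails for a generic Cantor minimal subshift; thus (iv) fails and $\{S_a,P_A\}$ is not a representation. The repair is to set $S_a:=\chi_{C(a)}\,u^*$ (equivalently $u^*\chi_{[a.]}$), which is exactly the paper's choice $t_a:=u^*p_{r(a)}$ transported to $C(\CO_\om)\times_T\mathbb Z$; all four relations then go through with $P_{r(\af)}=\chi_{[\af.]}$ and your surjectivity argument survives because $u=\sum_a S_a^*$. Note finally that with the corrected formula the gauge action corresponds to the conjugate dual action ($\gm_z\leftrightarrow\hat\alpha_{\bar z}$) rather than the dual action itself, but the fixed-point algebras are the same, so your last paragraph still yields the restriction statement.
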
 

\vskip 1pc\noindent  
The crossed products $C(X)\times_T \mathbb Z$ of 
Cantor minimal systems have been studied intensively 
(especially in \cite{GPS, HPS}). 
Perhaps one important result from the works, in our viewpoint, 
would be the fact that the crossed products
$C(\CO_\om) \times_{T} \mathbb Z$ can be completely classified 
by their ordered $K_0$-groups with distinguished oder units 
(\cite[Theorem 2.1]{GPS}). 
Also from the above theorem  and \cite[Theorem 1.4]{HPS}  
we know  that 
the labeled graph $C^*$-algebras $C^*(E_{\mathbb Z}, \CL_\om,\bE_{\mathbb Z})$ 
are  $A\mathbb T$-algebras  with 
$K_1(C^*(E_{\mathbb Z}, \CL_\om,\bE_{\mathbb Z}))=\mathbb Z$, hence 
they are not AF.

Finally, 
regarding the question of abundance of those Cantor minimal subshift systems,  
we notice a well known fact that 
$(X,T)$ is topologically conjugate to a two-sided subshift 
if and only if it is  expansive, and also from  \cite[Theorem 1]{DM} that 
this is the case if a Cantor system $(X,T)$ 
has a finite rank $K$ and $K\geq 2$  while  
odometer systems are the systems of rank one 
(we refer the reader to \cite{DM} for definitions and 
properties of this sort of systems).

\vskip 1pc 

\section{Preliminaries}

\subsection{\bf Labeled spaces} 
 
We will follow notational conventions of \cite{KPR} for 
graph $C^*$-algebras and of \cite{BP2, BCP} for
labeled spaces and their $C^*$-algebras. 
A  {\it directed graph}  $E=(E^0,E^1,r,s)$
consists of a countable vertex set $E^0$, a countable edge set $E^1$,
and the range, source maps $r$, $s: E^1\to E^0$. 
If $v\in E^0$ emits (receives, respectively) 
no edges it is called a {\it sink} ({\it source}, respectively). 
Throughout this paper, we assume that 
{\it  graphs have no sinks and no sources}.
 
$E^n$ denotes the set of all finite paths $\ld=\ld_1\cdots \ld_n$
of {\it length} $n$ ($|\ld|=n$),
($\ld_{i}\in E^1,\ r(\ld_{i})=s(\ld_{i+1}), 1\leq i\leq n-1$).
We write $E^{\leq n}$ and $E^{\geq n}$  
for the sets $\cup_{i=1}^n E^i$ and $\cup_{i=n}^\infty E^i$, respectively.
The range and source maps, $r$ and $s$, naturally extend to 
all finite paths $E^{\geq 0}$, where 
$r(v)=s(v)=v$ for $v\in E^0$. 
If a sequence of edges $\ld_i\in E^1(i\geq 1)$  satisfies  $r(\ld_{i})=s(\ld_{i+1})$,
one has an infinite path $\ld_1\ld_2\ld_3\cdots $
with the source vertex $s(\ld_1\ld_2\ld_3\cdots):=s(\ld_1)$.  
By $E^\infty$ we denote the set of all infinite paths.

 A {\it labeled graph} $(E,\CL)$ over a countable alphabet  $\CA$
 consists of a directed graph $E$ and
 a {\it labeling map} $\CL:E^1\to \CA$. 
 For $\ld=\ld_1\cdots\ld_n\in E^{\geq 1}$, we call 
 $\CL(\ld):=\CL(\ld_1)\cdots\CL(\ld_n)$ a ({\it labeled}) {\it path}.
 Similarly one can define an infinite labeled path 
 $\CL(\ld)$ for $\ld\in E^\infty$.  
A labeled graph $(E,\CL)$ is said to have a {\it repeatable path} $\bt$ 
if $\bt^n:=\bt\cdots\bt(\text{repeated $n$-times})\in\CL(E^{\geq 1})$ 
for all $n\geq 1$. 
 The {\it range} $r(\af)$ of a labeled path 
 $\af\in \CL(E^{\geq 1})$ is defined to be a vertex subset of $E^0$:
$$
r(\af)  =\{r(\ld) \,:\, \ld\in E^{\geq 1},\,\CL(\ld)=\af\},
$$
and  the {\it source} $s(\af)$ of $\af$ is defined similarly.
 The {\it relative range of $\af\in \CL(E^{\geq 1})$
 with respect to $A\subset 2^{E^0}$} is defined to be
$$
 r(A,\af)=\{r(\ld)\,:\, \ld\in E^{\geq 1},\ \CL(\ld)=\af,\ s(\ld)\in A\}.
$$ 
For notational convenience, we use a symbol  $\epsilon$ such that 
$r(\epsilon) =E^0$, $r(A, \epsilon) = A$ for all $A \subset E^0$,
and $\af=\epsilon\af=\af\epsilon$ for all $\af\in \CL(E^{\geq 1})$, and 
write 
$$\CL^\#(E):=\CL(E^{\geq 1})\cup \{\epsilon \}.$$  
We denote the subpath $\af_i\cdots \af_j$ of 
$\af=\af_1\af_2\cdots\af_{|\af|}\in \CL(E^{\geq 1})$
by $\af_{[i,j]}$ for $1\leq i\leq j\leq |\af|$.
A subpath of the form $\af_{[1,j]}$ is called
an {\it initial path} of $\af$. 
The symbol $\epsilon$ is regarded as an initial (and terminal)  
path of every path. 
 
 Let $\CB\subset 2^{E^0}$ be a collection of subsets of $E^0$.  
 If $r(A,\af)\in \CB$ for all $A\in \CB$ and $\af\in \CL(E^{\geq 1})$, 
 $\CB$ is said to be
 {\it closed under relative ranges} for $(E,\CL)$.
 We call $\CB$ an {\it accommodating set} for $(E,\CL)$
 if it is closed under relative ranges,
 finite intersections and unions and contains $r(\af)$ for all $\af\in \CL(E^{\geq 1})$.
The triple $(E,\CL,\CB)$ is called  a {\it labeled space} when 
$\CB$ is accommodating for $(E,\CL)$.

 For $A,B\in 2^{E^0}$ and $n\geq 1$, let
 $$ AE^n =\{\ld\in E^n\,:\, s(\ld)\in A\},\ \
  E^nB=\{\ld\in E^n\,:\, r(\ld)\in B\}.$$
 We write $E^n v$ for $E^n\{v\}$ and $vE^n$ for $\{v\}E^n$,
 and will use notations like $AE^{\geq k}$ and $vE^\infty$
 which should have obvious meaning.
 A labeled space $(E,\CL,\CB)$ is said to be {\it set-finite}
 ({\it receiver set-finite}, respectively) if for every $A\in \CB$ and $l\geq 1$ 
 the set  $\CL(AE^l)$ ($\CL(E^lA)$, respectively) is finite. 
 A labeled space $(E,\CL,\CB)$ is {\it finite} if 
 there are only finitely many labels.  

In this paper, we will always assume that 
labeled spaces $(E,\CL,\CB)$ are 
{\it weakly left-resolving}, namely  
$$r(A,\af)\cap r(B,\af)=r(A\cap B,\af)$$
for all $A,B\in \CB$ and  $\af\in \CL(E^{\geq 1})$. 
  $(E,\CL,\CB)$ is {\it left-resolving} if 
  $\CL : r^{-1}(v) \rightarrow \mathbf{\CA}$ is injective 
  for each $v \in E^0$.
Left-resolving labeled spaces are weakly left-resolving.

For each $l\geq 1$, the following relation on $E^0$, 
$$  v\sim_l w \ \text{ if and only if \ }  \CL(E^{\leq l} v)=\CL(E^{\leq l} w)$$
is actually an equivalence relation, and 
the equivalence class $[v]_l$ of $v\in E^0$  is called a 
{\it generalized vertex}.
   If $k>l$,  $[v]_k\subseteq [v]_l$ is obvious and
   $[v]_l=\cup_{i=1}^m [v_i]_{l+1}$
   for some vertices  $v_1, \dots, v_m\in [v]_l$ (\cite[Proposition 2.4]{BP2}).

\vskip 1pc

\begin{notation} 
Given a  labeled graph $(E,\CL)$,  
$\bE$  denotes the smallest {\it normal} accommodating set, that is 
the smallest one among the accommodating sets which are 
closed under relative complements.
\end{notation}

\vskip 1pc

\begin{prop}{\rm (\cite[Remark 2.1 and Proposition 2.4.(ii)]{BP2},  
 \cite[Proposition 2.3]{JKK})}
\label{prop-barE} 
Let $(E,\CL)$ be a labeled graph ($E$ has no sinks or sources). Then 
$$\bE=\{ \cup_{i=1}^{n}  [v_i]_l \,:\,  
v_i  \in E^0,\ l,n\geq 1 \}.$$
\end{prop}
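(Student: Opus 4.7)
The plan is to prove the two inclusions for the candidate collection
\[
\CC := \{ \cup_{i=1}^n [v_i]_l \,:\, v_i\in E^0,\ l,n\geq 1\},
\]
using the minimality of $\bE$ in one direction and a direct verification of the normal accommodating axioms in the other.

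For $\CC\subseteq \bE$, since $\bE$ is closed under finite unions, it suffices to show $[v]_l\in\bE$ for every $v\in E^0$ and $l\geq 1$. The key observation is that $w\sim_l v$ if and only if, for every $\af\in\CL(E^{\leq l})$, one has $w\in r(\af)\Leftrightarrow v\in r(\af)$, because $\CL(E^{\leq l}w)=\{\af\in\CL(E^{\leq l}): w\in r(\af)\}$. This yields the identity
\[
[v]_l \,=\, \Big(\bigcap_{\af\in\CL(E^{\leq l}v)} r(\af)\Big) \setminus \Big(\bigcup_{\bt\in\CL(E^{\leq l})\setminus\CL(E^{\leq l}v)} r(\bt)\Big),
\]
and since each $r(\af)\in\bE$ and $\bE$ is closed under finite intersections, finite unions, and relative complements, the Boolean combination on the right lies in $\bE$ (using the receiver set-finite hypothesis to ensure finiteness of the combination).

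For $\bE\subseteq \CC$, by minimality it is enough to verify that $\CC$ is itself a normal accommodating set containing $r(\af)$ for all $\af\in\CL(E^{\geq 1})$. The workhorse here is the refinement property $[v]_l=\cup_{i=1}^m [v_i]_{l+1}$ from \cite[Proposition~2.4]{BP2}, which lets us put any two elements of $\CC$ on a common level $l$. At such a level the classes $\{[v]_l\}_{v\in E^0}$ partition $E^0$, so closures under finite unions, finite intersections, and relative complements become routine combinatorics on partition blocks: for instance $(\cup_j [w_j]_l)\setminus(\cup_i [v_i]_l)$ is the union of those $[w_j]_l$ that coincide with no $[v_i]_l$. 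Containment of $r(\af)$ follows from $r(\af)=\cup\{[w]_{|\af|}: w\in r(\af)\}$.

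The main obstacle will be the closure of $\CC$ under relative ranges: given $A=\cup_i[v_i]_l$ and $\af\in\CL(E^{\geq 1})$, one must show $r(A,\af)\in\CC$. I would argue that $r([v]_l,\af)$ is $\sim_l$-saturated by exploiting the weak left-resolving hypothesis together with the fact that if $w,w'\in r([v]_l,\af)$ but $w\not\sim_l w'$, then a label in $\CL(E^{\leq l}w)\bigtriangleup\CL(E^{\leq l}w')$ would extend backward through $\af$ to a label distinguishing the $\sim_l$-class of some vertex in $[v]_l$ from itself, a contradiction. Once $r([v_i]_l,\af)$ is shown to be a finite union of generalized vertices (at the same level $l$, or a higher one via the refinement property), the identity $r(A,\af)=\cup_i r([v_i]_l,\af)$ finishes the verification and shows $\bE\subseteq\CC$.
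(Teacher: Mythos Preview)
The paper does not actually supply a proof of this proposition; it merely cites \cite[Remark~2.1 and Proposition~2.4(ii)]{BP2} and \cite[Proposition~2.3]{JKK}. Your two-inclusion strategy and your argument for $\CC\subseteq\bE$ via the Boolean formula
\[
[v]_l=\Big(\bigcap_{\af\in\CL(E^{\leq l}v)} r(\af)\Big)\setminus\Big(\bigcup_{\bt\in\CL(E^{\leq l})\setminus\CL(E^{\leq l}v)} r(\bt)\Big)
\]
are correct and in the spirit of those references. The verification of closure under unions, intersections, relative complements, and containment of the $r(\af)$ via the refinement property is also fine.

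There is, however, a genuine gap in your treatment of relative ranges. Your proposed contradiction argument does not go through: if $\bt\in\CL(E^{\leq l}w)\setminus\CL(E^{\leq l}w')$, the $\bt$-labeled path terminating at $w$ has no a~priori relation to the $\af$-labeled path from $[v]_l$ to $w$, so there is nothing to ``extend backward through $\af$''. More fundamentally, you are conflating two different statements: ``$r([v]_l,\af)$ is $\sim_l$-saturated'' (a union of $\sim_l$-classes) versus ``all elements of $r([v]_l,\af)$ lie in a single $\sim_l$-class''. Your contradiction argument is aimed at the latter, which is neither what you need nor true in general. The correct claim, proved in \cite[Proposition~2.4(ii)]{BP2}, is that $r([v]_l,a)$ is a finite union of generalized vertices at level $l+1$ (and hence $r([v]_l,\af)$ at level $l+|\af|$): if $w\in r([v]_l,a)$ and $w'\sim_{l+1}w$, one uses left-resolving to trace any $\bt a$-labeled path ending at $w'$ back through the \emph{unique} $a$-edge into $w'$, forcing its source $u'$ to satisfy $\CL(E^{\leq l}u')=\CL(E^{\leq l}u)$ for the source $u\in[v]_l$ of the witnessing edge. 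Note that this step genuinely uses left-resolving rather than merely weakly left-resolving; you should check which hypothesis is in force in the setting you need.
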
 

\vskip 1pc

\subsection{\bf Labeled graph $C^*$-algebras} 
Here we review the labeled graph $C^*$-algebras 
which are associated to set-finite, receiver set-finite, 
and weakly left-resolving labeled spaces 
(whose underlying graphs have no sinks or sources) 
although our results are concerning only about 
finite left-resolving spaces.  

Let $(E,\CL,\CB)$ be a  labeled space such that
$\bE\subset \CB$.
Recall from \cite[Definition 2.1]{BCP} that 
a {\it representation} of $(E,\CL,\CB)$ is a collection of 
projections $\{p_A : A\in \CB\}$ and
partial isometries
$\{s_a : a\in \CA\}$ such that for $A, B\in \CB$ and $a, b\in \CA$,
\begin{enumerate}
\item[(i)]  $p_{\emptyset}=0$, $p_Ap_B=p_{A\cap B}$, and
$p_{A\cup B}=p_A+p_B-p_{A\cap B}$,
\item[(ii)] $p_A s_a=s_a p_{r(A,a)}$,
\item[(iii)] $s_a^*s_a=p_{r(a)}$ and $s_a^* s_b=0$ unless $a=b$,
\item[(iv)] for each $A\in \CB$,
\begin{eqnarray}\label{representation} p_A=\sum_{a\in \CL(AE^1)} s_a p_{r(A,a)}s_a^*.
\end{eqnarray}
\end{enumerate}
It follows from  (iv) that $p_A=\sum_{\af\in\CL(AE^n)}s_\af  p_{r(A,\af)}s_\af^*$ 
for $n\geq 1$.
 By $C^*(p_A,s_a)$ we denote the $C^*$-algebra generated by 
$\{s_a,p_A: a\in \CA,\, A\in \CB \}$.

\vskip 1pc

\begin{remark}\label{review remarks}
Let $(E,\CL,\CB)$ be a  labeled space such that
$\bE\subset \CB$.
\begin{enumerate}
\item[(i)] 
There exists a $C^*$-algebra 
generated by a universal representation
$\{s_a,p_A\}$ of $(E,\CL,\CB)$ (see the proof of \cite[Theorem 4.5]{BP1}). 
If $\{s_a,p_A\}$ is a universal representation of $(E,\CL,\CB)$,
we call $C^*(s_a,p_A)$, denoted $C^*(E,\CL,\CB)$, the {\it labeled graph $C^*$-algebra} of
$(E,\CL,\CB)$. 
 Note that   $s_a\neq 0$ and $p_A\neq 0$ for $a\in \CA$
and  $A\in \CB$, $A\neq \emptyset$, and that
$s_\af p_A s_\bt^*\neq 0$ if and only if $A\cap r(\af)\cap r(\bt)\neq \emptyset$.
By definition of representation
and \cite[Lemma 4.4]{BP1},
it follows that  
\begin{eqnarray}\label{eqn-elements}
C^*(E,\CL,\CB)=\overline{span}\{s_\af p_A s_\bt^*\,:\,
\af,\,\bt\in  \CL^{\#}(E),\ A\in \CB\},
\end{eqnarray}
where $s_\epsilon$ is regarded as the unit of the multiplier algebra of 
$C^*(E,\CL,\mathcal B)$. 

\item[(ii)] Universal property of  $C^*(E,\CL,\CB)=C^*(s_a, p_A)$ 
defines the {\it gauge action} 
$\gm:\mathbb T\to Aut(C^*(E,\CL,\CB))$ such  that 
for $a\in \CL(E^1)$, $A\in \CB$, and $z\in \mathbb T$, 
$$\gm_z(s_a)=z s_a \ \text{ and } \  \gm_z(p_A)=p_A.$$

\item[(iii)]  
The fixed point algebra of $\gm$ is an AF algebra such that 
\begin{eqnarray}\label{fixed point}
C^*(E,\CL,\CB)^\gm=\overline{\rm span}\{s_\af p_A s_\bt^*: |\af|=|\bt|,\ A\in \CB\}
\end{eqnarray}
Moreover, since $\mathbb T$ is a compact group, 
there exists a faithful conditional expectation 
$$\Psi: C^*(E,\CL,\CB)\to C^*(E,\CL,\CB)^\gm.$$

\end{enumerate} 
\end{remark} 

\vskip 1pc 

Recall \cite{BP2, JK} that for a labeled space $(E,\CL,\bE)$, 
a path $\af\in \CL([v]_l E^{\geq 1})$ is {\it agreeable} for 
a generalized vertex $[v]_l$ if 
$\af=\bt^k\bt'$ for some $\bt\in \CL([v]_l E^{\leq l})$ and its initial path $\bt'$, and $k\geq 1$. 
A labeled space $(E,\CL,\bE)$  is said to be {\it disagreeable} 
if every $[v]_l$, $l\geq 1$, $v\in E^0$, is disagreeable 
in the sense that there is an $N\geq 1$ such that for all $n\geq N$ 
there is a path $\af\in \CL([v]_l E^{\geq n})$  which is not {\it agreeable}. 

\vskip 1pc 

\begin{remark}\label{repeatable} 
If $(E,\CL,\bE)$  is disagreeable, every representation $\{s_a,p_A\}$ such that 
$p_A\neq 0$ for all non-empty set $A\in \bE$ gives rise to a $C^*$-algebra 
$C^*(s_a,p_A)$ isomorphic to $C^*(E,\CL,\bE)$ (\cite[Theorem 5.5]{BP2} and 
\cite[Corollary 2.5]{JKP}). 
A labeled space $(E,\CL,\bE)$ is disagreeable if 
there is no repeatable paths in $(E, \CL)$ (\cite[Proposition 4.12]{JKK}).
\end{remark}
  
\vskip 1pc 
 
 $K$-theory of labeled graph $C^*$-algebras was obtained 
 in \cite{BCP}. 
 Let $(E,\CL,\CB)$ be a normal labeled space. 
 Since we assume that $E$ has no sink vertices ($E^0_{\rm sink} =\emptyset$), 
 the set  $\CB_J$ given in (2) of \cite{BCP}  
 coincides with $\CB$, and by \cite[Theorem 4.4]{BCP} 
 the linear map 
 $(1-\Phi) :{\rm span}_{\mathbb Z}\{\chi_A: A\in \CB\} \to 
  {\rm span}_{\mathbb Z}\{\chi_A: A\in\CB\}$ given by 
\begin{equation}\label{Phi}
(1-\Phi)(\chi_A)=\chi_A-\sum_{a\in \CL(AE^1)} \chi_{r(A,a)},\ \ A\in \CB
\end{equation} 
 determines the $K$-groups  of $C^*(E,\CL,\CB)$ as follows:
\begin{align} 
K_0(C^*(E,\CL,\CB))& \cong  {\rm span}_{\mathbb Z}\{\chi_A : A\in \CB \}/{\rm Im}(1-\Phi)\label{K0}\\ 
 K_1(C^*(E,\CL,\CB))& \cong \ker(1-\Phi)\label{K1}. 
\end{align} 
In (\ref{K0}), the isomorphism is given by $[p_A]_0\mapsto \chi_A+{\rm Im}(1-\Phi)$ for 
$A\in \CB$.

\vskip 1pc
 
\subsection{\bf  Cantor minimal systems that are subshifts} 
A (topological) dynamical system $(X,T)$ consists of 
a compact metrizable space $X$ 
and a homeomorphism $T$ on $X$.
By Krylov-Bogolyubov Theorem, a dynamical system $(X,T)$ 
admits a Borel probability measure $m$ which is $T$-invariant, 
that is $m(T^{-1}(E))=m(E)$ for all Borel sets $E$.
If there exists exactly one $T$-invariant probability measure, 
we say that the system $(X,T)$ is {\it uniquely ergodic}.
We will focus on the Cantor systems $(X,T)$ that are 
subshifts, and here we briefly review definitions and 
basic properties of such Cantor systems. 
 
For an alphabet $\CA$ ($|\CA|\geq 2)$, 
 a {\it word} (or {\it block}) over  $\CA$ 
is a  finite sequence $b=b_1\cdots b_k$ of symbols (or letters) $b_i$'s in $\CA$ 
of length $|b|:=k\geq 1$. 
By $\CA^+$, we denote the set of all {\it words}. 
Let  $\epsilon$ be  the empty word of length zero and let
$\CA^*:=\CA^+\cup \{\epsilon\}$.   
The set 
$$\CA^{\mathbb Z}:=\{\om=\cdots \om_{-1}\om_0\om_1\cdots : \om_i\in \CA\}$$ 
of all two-sided infinite sequences on $\CA$,  
endowed with the product topology of the discrete topology on  
$\CA$,  is a totally disconnected compact metrizable space. Actually  
the   {\it cylinder sets} 
$${}_t[b]:=\{\om \in \CA^{\mathbb Z} : \om_{[t, t+|b|-1]}=b\},$$
 $b\in \CA^+$, $t\in \mathbb Z$,  
are clopen and form a base for the topology, where 
$\om_{[t_1,t_2]}$ denotes the block $\om_{t_1}\cdots \om_{t_2}$  ($t_1\leq t_2$).
Thus the characteristic functions $\chi_{{}_t[b]}$  
are continuous  for all $b\in \CA^+$, $t\in \mathbb Z$. 
If $b=\om_{[t_1,t_2]}$ holds for $b\in \CA^+$ and 
$\om\in \CA^{\mathbb Z}\cup \CA^+$, 
$b$ is called a {\it factor} of $\om$.  
For $\om\in \CA^{\mathbb Z}$ (or  $\CA^{\mathbb N}$), 
the set of all factors of $\om$ is denoted by
$$L_\om=\{\om_{[t_1,t_2]} : t_1\leq t_2\}.$$
For convenience, we will  use the following notation:
$$[.b]:={}_0[b],\ \ [b.]:={}_{-|b|}[b],\ \  [b.c]:={}_{-|b|}[bc]$$ 
for words $b,c\in \CA^+$.

 The {\it shift} transform $T:\CA^{\mathbb Z}\to \CA^{\mathbb Z}$ given by 
 $$(Tx)_k=x_{k+1}, \  k\in \mathbb Z,$$
 is a homeomorphism. 
A {\it subshift} on $\CA$ is a (topological) dynamical system  $(X,T)$ 
which consists of a $T$-invariant closed subset $X\subset \CA^{\mathbb Z}$ and 
 the restriction   $T|_X$ which we denote by $T$ again.
 If we consider the shift transform $T$ on the space $\CA^{\mathbb N}$ of one-sided 
 infinite sequences, it is a
 continuous transform (but not a homeomorphism).

For $\om\in\CA^{\mathbb Z}$, the closure of the  orbit of $\om$  
is denoted by 
$$\CO_\om:=\overline{\{ T^i(\om): i\in \mathbb Z\}}\ \subset \CA^{\mathbb Z}.$$ 
A dynamical system $(X, T)$ is {\it minimal} if 
every orbit is dense in $X$, namely 
$\CO_x=X$ for all $x\in X$. 
It is well known  that a subshift $(\CO_\om,T)$ 
is minimal if and only if $\om$ is {\it almost periodic} 
(or {\it uniformly recurrent}) in the sense that 
each factor of $\om$ occurs with bounded gaps.

\vskip 1pc 
We provide examples of subshifts that are Cantor minimal systems: 

\vskip 1pc

\begin{ex}(\textbf{Generalized-Morse sequences})
(\cite{Ke}) \label{generalized Morse} 
Let $\CA=\{0,1\}$.  
For a one-sided sequence $x\in \CA^{\mathbb N}$, let
$ \mathscr{O}_x:=\{\om\in \CA^{\mathbb Z} : L_\om\subset L_x\}$.
Note that each block $b\in \CA^+$ 
defines a block $\tilde{b}$, 
called the {\it mirror image} of $b$, 
such that $\tilde{b}_i=b_i+1$ (mod 2). 
For $c=c_0\cdots c_n\in \CA^+$,  the product 
$b\times c$ of $b$ and $c$ denotes the block
(of length $|b|\times |c|$) obtained by  
putting $n+1$ copies of either $b$ or $\tilde b$ next to each other 
according to the rule of choosing the $i$th copy as $b$ if $c_i=0$ and 
$\tilde b$ if $c_i=1$. For example, if 
$b=01$ and $c=011$, then the product block $b\times c$ is equal to 
$b\tilde b \tilde b=011010$.

Let $\{b^i:=b^i_0\cdots b^i_{|b^i|-1}\}_{i\geq 1}\subset \CA^+$ 
be a sequence of blocks  with length $|b^i|\geq 2$ such that  
$b^i_0=0$ for all $i\geq 0$. 
Since the product operation $\times$ is associative, one can consider 
a sequence of the form 
$$x=b^0\times b^1\times b^2\times \cdots \in \CA^{\mathbb N}$$ 
which is called a (one-sided) {\it recurrent} sequence  
(see \cite[Definition 7]{Ke}). 
We call $x=b^0\times b^1\times b^2\times \cdots\in \CA^{\mathbb N}$ a 
{\it (generalized) one-sided  Morse sequence} if it is non-periodic and 
$$ \sum_{i=0}^\infty \min (r_0(b^i),r_1(b^i))=\infty,$$ 
where $r_a(b)$ is the {\it relative frequency of occurrence} of 
$a\in \CA$ in $b\in \CA^+$ 
(see \cite[p.338]{Ke}).
If  $x \in \CA^{\mathbb N}$ is a non-periodic recurrent sequence, 
it is almost periodic, and  
there exists $\om\in \mathscr{O}_x$ with $x= \om_{[0,\infty)}$.
Moreover, $x$ is a one-sided Morse sequence 
if and only if $\CO_\om$ is minimal and  uniquely ergodic, 
and if this is the case, then 
$\CO_\om=\mathscr{O}_x$. 

  By a {\it generalized Morse sequence}, 
 we mean a two-sided sequence $\om\in \CA^{\mathbb Z}$ such that 
 $x:=\om_{[0,\infty)}$ is a one-sided Morse sequence and 
 $L_\om=L_x$. 
 (Note that the term a {\it two-sided generalized 
 Morse sequence} used in \cite{Ke} means a sequence  
 $\om\in \mathscr{O}_x$ for some one-sided Morse sequence $x$.)    
 
The subshifts $(\CO_\om,T)$ for generalized Morse sequences 
$\om$ are uniquely ergodic Cantor minimal systems. 
\end{ex}

\vskip 1pc 
 
\begin{ex} (\textbf{Substitution subshifts}) (\cite{Ho}) 
Let $\CA$ be a finite alphabet with $|\CA|\geq 2$. 
A  {\it substitution} on $\CA$ is a map $\sm: \CA\to \CA^+$.   
   $\sm$ can be iterated to define maps $\sm^k: \CA\to \CA^+$ 
   for all positive integer $k$, 
   and is called {\it primitive} if there exists $k\geq 1$ such that 
$b$ appears in $\sm^k(a)$ for all $a,b\in \CA$.  
By the {\it language} $L_\sm$ of a substitution $\sm$ we mean 
the set of words that are factors of $\sm^k(a)$ 
for some $k\geq 1$ and $a\in \CA$. 
  The subshift              
$$X_\sm:=\{x\in \CA^{\mathbb Z}\mid L_x\subset L_\sm\},$$
 associated to this language $L_\sm$ is called 
 the {\it substitution subshift} defined by $\sm$. 
If $\sm$ is primitive, it is known that the system $(X_\sm,T)$  is minimal 
and thus a Cantor minimal system. 

A sequence $\om\in \CA^{\mathbb Z}$ is called a 
{\it fixed point} of $\sm$ if 
$\sm(\om)=\om$, and it is known that for any primitive substitution $\sm$, 
there is an $n\geq 1$ such that $\sm^n$ admits a fixed point $\om$ in $X_\sm$. 
Since $\sm^n$ and $\sm$ define the same dynamical system, we can 
only consider primitive substitutions $\sm$ with a fixed point $\om\in X_\sm$, 
and in this case,  $X_\sm=\CO_\om$ follows. 
To avoid the case where $X_\sm$ is finite, or equivalently $\om$ is 
shift periodic, we also assume that $\sm$ is an {\it aperiodic} substitution 
(giving rise to the infinite system $X_\sm$). 
Then the substitution subshifts $(X_\sm, T)=(\CO_\om,T)$ are 
uniquely ergodic minimal Cantor systems. 
\end{ex}

\vskip 1pc 
 
\begin{ex}(\textbf{Thue-Morse sequence})\label{Thue Morse} 
Let $\CA=\{0,1\}$ and $b^i:=01\in \CA^+$ for all $i\geq 0$. 
Then the recurrent sequence 
$$x:=b^0\times b^1\times b^2\times \cdots\ =\, 01\times b^1\times\cdots 
=0110\times b^2\times \cdots= 01101001\times b^3\times\cdots$$ 
is a one-sided Morse sequence called the {\it Thue-Morse sequence} and 
$$\om:=x^{-1}.x
=\cdots 10010110.011010011001 \cdots\in \mathscr{O}_x$$ 
is a generalized Morse sequence, 
where
$x^{-1}:=  \cdots x_{2} x_{1} x_{0}$ is the sequence obtained 
by writing $x= x_0 x_1 \cdots$ in reverse order.   
In fact, $\om$ is the sequence constructed from $x$ in the proof of 
\cite[Lemma 4]{Ke}, and 
it is well known \cite{GH} that  $\om$ is characterized as a sequence with 
no blocks of the form 
$b b b_0$ for any $b=b_0\cdots b_{|b|-1}\in \CA^+$. 
By Example~\ref{generalized Morse}, the subshift 
$(\CO_\om, T)$ is a uniquely ergodic Cantor minimal system.

On the other hand, this Thue Morse sequence $\om$ is 
the fixed point of the primitive 
aperiodic substitution $\sm:\CA\to \CA^+$ given by 
$$\sm(0)=01 \ \text{ and }\ \sm(1)=10,$$
so that the subshift $(\CO_\om,T)$ can also be viewed 
as a substitution subshift $(X_\sm,T)$.
\end{ex}

\vskip 1pc

\section{Main results}

\noindent
Throughout this section, 
$E_{\mathbb Z}$ will denote the following graph: 

\vskip 1.5pc 
{} \hskip 1.1cm
\xy  /r0.3pc/:(-44.2,0)*+{\cdots};(44.3,0)*+{\cdots .};
(-40,0)*+{\bullet}="V-4";
(-30,0)*+{\bullet}="V-3";
(-20,0)*+{\bullet}="V-2";
(-10,0)*+{\bullet}="V-1"; (0,0)*+{\bullet}="V0";
(10,0)*+{\bullet}="V1"; (20,0)*+{\bullet}="V2";
(30,0)*+{\bullet}="V3";
(40,0)*+{\bullet}="V4";
 "V-4";"V-3"**\crv{(-40,0)&(-30,0)};
 ?>*\dir{>}\POS?(.5)*+!D{};
 "V-3";"V-2"**\crv{(-30,0)&(-20,0)};
 ?>*\dir{>}\POS?(.5)*+!D{};
 "V-2";"V-1"**\crv{(-20,0)&(-10,0)};
 ?>*\dir{>}\POS?(.5)*+!D{};
 "V-1";"V0"**\crv{(-10,0)&(0,0)};
 ?>*\dir{>}\POS?(.5)*+!D{};
 "V0";"V1"**\crv{(0,0)&(10,0)};
 ?>*\dir{>}\POS?(.5)*+!D{};
 "V1";"V2"**\crv{(10,0)&(20,0)};
 ?>*\dir{>}\POS?(.5)*+!D{};
 "V2";"V3"**\crv{(20,0)&(30,0)};
 ?>*\dir{>}\POS?(.5)*+!D{};
 "V3";"V4"**\crv{(30,0)&(40,0)};
 ?>*\dir{>}\POS?(.5)*+!D{};
 (-35,1.5)*+{{e_{-4}}};(-25,1.5)*+{{e_{-3}}};
 (-15,1.5)*+{{e_{-2}}};(-5,1.5)*+{{e_{-1}}};(5,1.5)*+{e_{0}};
 (15,1.5)*+{e_{1}};(25,1.5)*+{{e_{2}}};(35,1.5)*+{{e_{3}}};
 (0.1,-2.5)*+{v_0};(10.1,-2.5)*+{v_1};
 (-9.9,-2.5)*+{v_{-1}};
 (-19.9,-2.5)*+{v_{-2}};
 (-29.9,-2.5)*+{v_{-3}};
 (-39.9,-2.5)*+{v_{-4}}; 
 (20.1,-2.5)*+{v_{2}};
 (30.1,-2.5)*+{v_{3}};
 (40.1,-2.5)*+{v_{4}}; 
\endxy 
\vskip 1.5pc 

\noindent
Given a two-sided  sequence 
$\om=\cdots \om_{-1}\om_0\om_1 \cdots \in \CA^{\mathbb Z}$,  
we obtain a labeled graph  $(E_{\mathbb Z},\CL_\om)$ shown below

\vskip 1.5pc 
\xy  /r0.3pc/:(-44.2,0)*+{\cdots};(44.3,0)*+{\cdots,};
(-40,0)*+{\bullet}="V-4";
(-30,0)*+{\bullet}="V-3";
(-20,0)*+{\bullet}="V-2";
(-10,0)*+{\bullet}="V-1"; (0,0)*+{\bullet}="V0";
(10,0)*+{\bullet}="V1"; (20,0)*+{\bullet}="V2";
(30,0)*+{\bullet}="V3";
(40,0)*+{\bullet}="V4";
 "V-4";"V-3"**\crv{(-40,0)&(-30,0)};
 ?>*\dir{>}\POS?(.5)*+!D{};
 "V-3";"V-2"**\crv{(-30,0)&(-20,0)};
 ?>*\dir{>}\POS?(.5)*+!D{};
 "V-2";"V-1"**\crv{(-20,0)&(-10,0)};
 ?>*\dir{>}\POS?(.5)*+!D{};
 "V-1";"V0"**\crv{(-10,0)&(0,0)};
 ?>*\dir{>}\POS?(.5)*+!D{};
 "V0";"V1"**\crv{(0,0)&(10,0)};
 ?>*\dir{>}\POS?(.5)*+!D{};
 "V1";"V2"**\crv{(10,0)&(20,0)};
 ?>*\dir{>}\POS?(.5)*+!D{};
 "V2";"V3"**\crv{(20,0)&(30,0)};
 ?>*\dir{>}\POS?(.5)*+!D{};
 "V3";"V4"**\crv{(30,0)&(40,0)};
 ?>*\dir{>}\POS?(.5)*+!D{};
 (-35,1.5)*+{\om_{-4}};(-25,1.5)*+{\om_{-3}};
 (-15,1.5)*+{\om_{-2}};(-5,1.5)*+{\om_{-1}};(5,1.5)*+{\om_0};
 (15,1.5)*+{\om_1};(25,1.5)*+{\om_{2}};(35,1.5)*+{\om_{3}};
 (0.1,-2.5)*+{v_0};(10.1,-2.5)*+{v_1};
 (-9.9,-2.5)*+{v_{-1}};
 (-19.9,-2.5)*+{v_{-2}};
 (-29.9,-2.5)*+{v_{-3}};
 (-39.9,-2.5)*+{v_{-4}}; 
 (20.1,-2.5)*+{v_{2}};
 (30.1,-2.5)*+{v_{3}};
 (40.1,-2.5)*+{v_{4}}; 
 (-53,0)*+{(E_{\mathbb Z},\CL_\om)};
\endxy 
\vskip 1.5pc 
\noindent 
where the labeling map $\CL_\om: E_{\mathbb Z}^1\to \CA$ is given  by 
$\CL_\om (e_n)=\om_n$ for $e_n\in E_{\mathbb Z}^1$.
Then we also have a labeled space  $(E_{\mathbb Z},\CL_\om, \bE_{\mathbb Z})$  
with the smallest accommodating set $\bE_{\mathbb Z}$ 
which is closed under relative complements.

\vskip 1pc

\noindent 
{\bf Assumption}. In this section, unless stated otherwise, 
$\CA$ is a finite alphabet with $|\CA|\geq 2$ and 
$\om \in\CA^{\mathbb Z}$ denotes an almost periodic sequence  
such that the  subshift $(\CO_\om, T)$ is a  
Cantor minimal system.

\vskip 1pc 

\subsection{The fixed point algebra 
$C^*(E_{\mathbb Z},\CL_\om, \bE_{\mathbb Z})^\gm$ of the gauge action $\gm$} 
Let $C^*(E_{\mathbb Z},\CL_\om,\bE_{\mathbb Z})=C^*(s_a, p_A)$ be 
the labeled graph $C^*$-algebra 
associated with the labeled space $(E_{\mathbb Z},\CL_\om,\bE_{\mathbb Z})$. 
Since the labeled paths $\CL_\om(E_{\mathbb Z}^{\geq 1})$ 
are exactly the factors of the sequence $\om$, 
from now on we briefly denote 
the whole labeled paths by $L_\om$.

By (\ref{fixed point}), we know that 
the fixed point algebra of the gauge action $\gm$ 
is generated by elements of the form $s_\af p_A s_\bt^*$ ($|\af|=|\bt|$). 
But, in the case $(E_{\mathbb Z},\CL_\om,\bE_{\mathbb Z})$, 
 it is rather obvious that  
$s_\af p_A s_\bt^*\neq 0$, $|\af|=|\bt|$, only if 
 $\af=\bt$ and $A\cap r(\af)\neq \emptyset$. 
 Since $\CL_\om(E^l v)$ consists of 
 a single path for each vertex $v$ and $l\geq 1$,    
 every generalized vertex $[v]_l$ is equal to 
 the range  $r(\af)$ for  a path $\af$ with $\CL_\om(E^l v) =\{\af\}$. 
Hence, by Proposition~\ref{prop-barE},  
we have  
\begin{eqnarray}\label{fixed point} 
C^*(E_{\mathbb Z},\CL_\om,\bE_{\mathbb Z})^\gm 
  =\overline{\rm span}\{s_\af p_{r(\bt\af)} s_\af^*: 
  \af, \bt \in  L_\om \}.
\end{eqnarray} 
Moreover $C^*(E_{\mathbb Z},\CL_\om,\bE_{\mathbb Z})^\gm$ 
is easily seen to be a commutative $C^*$-algebra.
 For each $k\geq 1$, let 
$$F_k:={\rm span}\{ s_\af p_{r(\af'\af)} s_\af^*: \af,\af'\in  L_\om, 
|\af|=|\af'|=k \}.$$ 
The (finitely many) elements $s_\af p_{r(\af'\af)} s_\af^*$ in 
$F_k$ are linearly independent and actually orthogonal to each other so that 
$F_k$ is a finite dimensional subalgebra of $C^*(E_{\mathbb Z},\CL_\om, \bE_{\mathbb Z})^\gm$. 
Moreover  $F_k$ is a subalgebra of $F_{k+1}$ because 
$$s_\af p_{r(\af'\af)} s_\af^*
=\sum_{b\in\CA} s_{\af b} p_{r(\af'\af b)} s_{\af b}^*
=\sum_{a,b\in\CA} s_{\af b} p_{r(a\af'\af b)} s_{\af b}^*.$$ 
This gives rise to an inductive sequence 
$\displaystyle F_1 \xrightarrow{\iota_1} F_2 \xrightarrow{\iota_2} \cdots$  
of finite dimensional $C^*$-algebras, where  
the connecting maps $\iota_k:F_k\to F_{k+1}$ are the inclusions for all $k\geq 1$, 
from which 
we obtain an AF algebra $\varinjlim  F_k$. 

\vskip 1pc

\begin{prop}\label{AF structure} 
For the labeled space $(E_{\mathbb Z},\CL_\om,\bE_{\mathbb Z})$, we have
$$C^*(E_{\mathbb Z},\CL_\om, \bE_{\mathbb Z})^\gm= \varinjlim  F_k.$$
\end{prop}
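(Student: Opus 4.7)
The plan is to show two things: first, that each $F_k$ is already contained in the fixed point algebra, and second, that every spanning element $s_\af p_{r(\bt\af)}s_\af^*$ of $C^*(E_{\mathbb Z},\CL_\om,\bE_{\mathbb Z})^\gm$ described in \eqref{fixed point} lies in $F_k$ for some sufficiently large $k$. Together with the observation (already made in the excerpt) that the $F_k$ form an increasing tower of finite dimensional $*$-subalgebras under the inclusion maps $\iota_k$, these two facts identify $\overline{\bigcup_k F_k}$ with the inductive limit $\varinjlim F_k$ and with the fixed point algebra.

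The first inclusion is immediate from the definition of $F_k$ and \eqref{fixed point}, so the content is in the second. Given a generator $s_\af p_{r(\bt\af)}s_\af^*$ with $\af,\bt\in L_\om$, I would split into two cases depending on whether $|\bt|\le |\af|$ or $|\bt|>|\af|$. In the first case, set $k:=|\af|$ and $m:=|\bt|$. Because $\om$ is a sequence over the finite alphabet $\CA$ and the labeling of $E_{\mathbb Z}$ is one-to-one on each $E_{\mathbb Z}^{\le \ell}v$, the range $r(\bt\af)$ decomposes as the disjoint union
\begin{equation*}
r(\bt\af)\;=\;\bigsqcup_{\gm}\; r(\gm\bt\af),
\end{equation*}
where $\gm$ runs over the finitely many words of length $k-m$ with $\gm\bt\in L_\om$: indeed every vertex in $r(\bt\af)$ is the terminus of a unique finite path of length $k+|\af|$ whose initial segment of length $k-m$ is some such $\gm$. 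Applying $p_{r(\bt\af)}=\sum_\gm p_{r(\gm\bt\af)}$ in $\bE_{\mathbb Z}$ and setting $\af':=\gm\bt$ (of length $k$) then gives
\begin{equation*}
s_\af\, p_{r(\bt\af)}\, s_\af^*\;=\;\sum_{\gm}\, s_\af\, p_{r(\af'\af)}\, s_\af^*\;\in\; F_k.
\end{equation*}

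In the second case, with $k:=|\bt|$ and $m:=|\af|<k$, I would use relation (iv) in the form $p_{r(\bt\af)}=\sum_{\gm\in\CL_\om(r(\bt\af)E^{k-m})} s_\gm p_{r(\bt\af\gm)}s_\gm^*$, which is valid since $\bE_{\mathbb Z}$ is closed under relative ranges and $(E_{\mathbb Z},\CL_\om,\bE_{\mathbb Z})$ is set-finite. Pushing $s_\af$ and $s_\af^*$ through the sum produces
\begin{equation*}
s_\af\, p_{r(\bt\af)}\, s_\af^*\;=\;\sum_\gm s_{\af\gm}\, p_{r(\bt\af\gm)}\, s_{\af\gm}^*,
\end{equation*}
and each summand sits in $F_k$ by taking $\af'':=\af\gm$ and $\af':=\bt$, both of length $k$. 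The main (mild) obstacle is keeping the bookkeeping straight when one spells out $r(\bt\af\gm)=r(\af'\af'')$ and verifying that the decompositions above are honest identities in $\bE_{\mathbb Z}$; this ultimately rests on the fact that in $E_{\mathbb Z}$ each vertex has a unique incoming and outgoing edge, so a labeled path determines its underlying edge path up to choice of starting vertex.

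Combining the two cases shows $\bigcup_k F_k$ contains every spanning element listed in \eqref{fixed point}, hence is norm-dense in $C^*(E_{\mathbb Z},\CL_\om,\bE_{\mathbb Z})^\gm$. Since the connecting maps $\iota_k\colon F_k\hookrightarrow F_{k+1}$ are isometric $*$-homomorphisms, the closure of this increasing union is canonically identified with $\varinjlim F_k$, completing the proof.
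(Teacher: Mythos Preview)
Your proposal is correct and follows essentially the same approach as the paper: both show the easy containment $\varinjlim F_k\subset C^*(E_{\mathbb Z},\CL_\om,\bE_{\mathbb Z})^\gm$ and then prove density by placing each generator $s_\af p_{r(\bt\af)}s_\af^*$ into some $F_k$ via the same two manipulations---extending $\bt$ on the left when $|\bt|<|\af|$, and using relation (iv) to extend $\af$ on the right when $|\bt|>|\af|$. The only cosmetic difference is that the paper separates out the trivial case $|\bt|=|\af|$, whereas you absorb it into your first case with $\gm=\epsilon$.
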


\begin{proof} Since  
$F_k\subset C^*(E_{\mathbb Z},\CL_\om, \bE_{\mathbb Z})^\gm$ for all $k\geq 1$ 
and $\overline{\cup_k F_k}=\varinjlim  F_k$, 
it is clear that $\varinjlim  F_k\subset C^*(E_{\mathbb Z},\CL_\om, \bE_{\mathbb Z})^\gm$.  
Thus it suffices to know that the algebra $\cup_k F_k$ is dense in  
$C^*(E_{\mathbb Z},\CL_\om, \bE_{\mathbb Z})^\gm$  
and then by (\ref{fixed point}) 
we only need to show that for $y:=s_\af p_{r(\bt\af)} s_\af^*$, 
there is  $k\geq 1$ such that  $y\in F_k$.    
If $|\bt\af|=2|\af|$, then $y\in F_k$ for $k=|\af|$. 
If $|\bt\af|> 2|\af|$, then 
$$y=s_\af p_{r(\bt\af)} s_\af^*=
\sum_{\nu\in \CL_\om(E^{|\bt|-|\af|})} 
s_{\af\nu}p_{r(\bt\af\nu)}s_{\af\nu}^*\in F_k$$ 
for $k=|\bt|$. 
If $|\bt\af|< 2|\af|$, we also have 
$$y=s_\af p_{r(\bt\af)} s_\af^*=
\sum_{\nu\in \CL_\om(E^{|\af|-|\bt|})} s_{\af}p_{r(\nu\bt\af)}s_{\af}^*
\in F_k$$ for $k=|\af|$. 
\end{proof}

\vskip 1pc
 
\begin{prop}\label{AF isomorphism} There is a surjective isomorphism  
\begin{eqnarray}\label{eqn AF iso}
\rho: C^*(E_{\mathbb Z},\CL_\om, \bE_{\mathbb Z})^\gm\to C(\CO_\om)
\end{eqnarray} such that 
 $\rho(s_\af p_{r(\af'\af)} s_\af^*)=\chi_{[\af'.\af]}$ 
 for $s_\af p_{r(\af'\af)} s_\af^*\in F_k$, $k\geq 1$. 
\end{prop}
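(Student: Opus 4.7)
My plan is to build $\rho$ as the inductive limit of a compatible family of $*$-homomorphisms $\rho_k\colon F_k \to C(\CO_\om)$, each defined on the orthogonal generators of $F_k$ by
\[ \rho_k\bigl(s_\af p_{r(\af'\af)} s_\af^*\bigr) := \chi_{[\af'.\af]}, \qquad |\af|=|\af'|=k. \]
Since each $F_k$ is a commutative finite-dimensional $C^*$-algebra spanned by the pairwise orthogonal minimal projections $s_\af p_{r(\af'\af)}s_\af^*$ (as already noted before Proposition~\ref{AF structure}), this prescription extends to a well-defined $*$-homomorphism once I check two things: it sends the zero generators to $0$, and it sends distinct non-zero generators to orthogonal projections. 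The latter is immediate from disjointness of the cylinders $[\af'.\af]$ inside $\CA^{\mathbb Z}$. For the former, $s_\af p_{r(\af'\af)}s_\af^* = 0 \iff r(\af'\af) = \emptyset \iff \af'\af \notin L_\om$, whereas $\chi_{[\af'.\af]}|_{\CO_\om} \equiv 0 \iff [\af'.\af] \cap \CO_\om = \emptyset$. These two conditions coincide because minimality of $(\CO_\om, T)$ forces $L_x = L_\om$ for every $x \in \CO_\om$, so a word is a factor of $\om$ iff it appears as a factor of every element of $\CO_\om$.

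Next I would verify compatibility with the connecting inclusions $\iota_k \colon F_k \hookrightarrow F_{k+1}$. The decomposition displayed in the proof of Proposition~\ref{AF structure},
\[ s_\af p_{r(\af'\af)} s_\af^* = \sum_{a, b \in \CA} s_{\af b}\, p_{r(a\af'\af b)}\, s_{\af b}^*, \]
is matched on the function side by the finite disjoint partition $[\af'.\af] = \bigsqcup_{a,b \in \CA} [a\af'.\af b]$ (empty terms being precisely those with $a\af'\af b \notin L_\om$), so $\rho_{k+1} \circ \iota_k = \rho_k$. Hence the $\rho_k$ assemble into a $*$-homomorphism $\rho$ on the inductive limit $\varinjlim F_k = C^*(E_{\mathbb Z},\CL_\om,\bE_{\mathbb Z})^\gm$.

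Finally, each $\rho_k$ is injective because it bijects the non-zero minimal projections of $F_k$ onto linearly independent orthogonal projections in its image; hence $\rho$ is injective as well. For surjectivity, the image of $\rho$ contains $\chi_{[\af'.\af]}$ for every factor $\af'\af$ of $\om$, and any basic cylinder function $\chi_{{}_t[b]}\in C(\CO_\om)$ can be written as a finite sum of such characteristic functions. The image therefore contains a unital $*$-subalgebra separating points of $\CO_\om$, hence is dense by Stone--Weierstrass. The main subtlety is the vanishing step above, where minimality of the subshift is genuinely needed; the remainder is a mechanical matching of orthogonality and refinement relations between paths in $E_{\mathbb Z}$ labeled by factors of $\om$ and cylinder sets in $\CO_\om$.
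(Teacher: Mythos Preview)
Your proof is correct and follows essentially the same approach as the paper: define $\rho_k$ on generators, verify compatibility with the inclusions $\iota_k$ via the same refinement identity $[\af'.\af]=\bigcup_{a,b}[a\af'.\af b]$, and deduce injectivity from injectivity of each $\rho_k$ and surjectivity from the fact that every cylinder function $\chi_{{}_t[\bt]}$ lies in the image. Two minor remarks: your appeal to Stone--Weierstrass is unnecessary once you have shown all cylinder functions are in the image (their span is already dense), and the vanishing step does not actually require minimality---the orbit-closure property alone gives $L_x\subset L_\om$ for every $x\in\CO_\om$, which together with $\om\in\CO_\om$ yields the equivalence $\af'\af\notin L_\om\iff [\af'.\af]\cap\CO_\om=\emptyset$.
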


\begin{proof} Note that for each $k\geq 1$, 
the map $\rho_k: F_k\to C(\CO_\om)$ given by 
$$\rho_k(s_\af p_{r(\af'\af)} s_\af^*)=\chi_{[\af'.\af]}$$ 
is a $*$-homomorphism (we omit the proof) such that 
for $y=s_\af p_{r(\af'\af)} s_\af^*\in F_k$, 
$$\rho_k(y)=\rho_{k+1}(\iota_k(y)),$$  
where $\iota_k:F_k\to F_{k+1}$ is the inclusion map.  
In fact, 
$\iota_k(y)= \sum_{a,b\in\CA} s_{\af b} p_{r(a\af'\af b)} s_{\af b}^*$, 
so that 
$$\rho_{k+1}(\iota_k(y))=
\rho_{k+1}\big( \sum_{a,b\in\CA} s_{\af b} p_{r(a\af'\af b)} s_{\af b}^*\big)
=\sum_{a,b\in\CA} \chi_{[a\af'.\af b]}.$$
But $\sum_{a,b\in\CA} \chi_{[a\af'.\af b]}=\chi_{[\af'.\af]}$ is obvious from 
$\cup_{a,b\in\CA}[a\af'.\af b]=[\af'.\af]$. 
Hence, 
there exists a $*$-homomorphism 
$\rho: \varinjlim  F_k \to C(\CO_\om)$ satisfying 
$\rho(y)=\rho_k(y)$ for all $y\in F_k$, $k\geq 1$. 
Since each $\rho_k$ is injective, so is $\rho$.
 
 Now we show that 
$\rho$ is surjective to complete the proof. 
Let $\chi_{{}_t[\bt]}\in C(\CO_\om)$ for $t\in \mathbb Z$ and 
$\bt\in   L_\om$. 
Assuming $t>0$, we can write 
$\displaystyle \chi_{{}_t[\bt]} =\sum_{\af,\nu} \chi_{ [\af.\nu\bt]}$, where 
the sum is taken over all $\af$, $\nu$ with $|\nu|=t$ and $|\af|=|\nu\bt|$. 
Then for $k:=|\bt|+t$, we have 
$$\chi_{{}_t[\bt]}=\rho_k\big(\,\sum_{\af,\nu} s_\af p_{r(\af\nu\bt)}s_\af^*\big)\in \rho(F_k). 
$$ 
In the case  $t\leq 0$, a similar argument shows 
that $\chi_{{}_t[\bt]}\in \rho(F_k)$ for some $k$.
Thus $\rho$ is surjective since     
${\rm span}\{\chi_{{}_t[\bt]} : t\in \mathbb Z,\,\bt\in  L_\om \}$
is a dense subalgebra of $C(\CO_\om)$.
\end{proof}

\vskip 1pc

\begin{remark}\label{measure} 
It follows from general theory for dynamical systems  that  
the systems $(\CO_\om,T)$ considered in this paper have always 
$T$-invariant ergodic probability measure 
(for example, see \cite[Chapter VIII]{Da}).   
If $m_\om$ is such a $T$-invariant ergodic measure, 
the unital commutative  AF algebra $C(\CO_\om)$ of all 
continuous functions on $\CO_\om$
admits a (tracial) state 
$$
f\mapsto \int_{\CO_\om} f {\rm d} m_\om: C(\CO_\om)\to \mathbb C 
$$
which we also write $m_\om$. 
Since $m_\om$  is $T$-invariant, it easily follows that 
$m_\om (\chi_{{}_t[b]})=m_\om (\chi_{{}_t[b]}\circ T)=m_\om (\chi_{{}_{t+1}[b]})$,
and hence  
\begin{eqnarray}\label{T-invariance}
m_\om (\chi_{{}_t[b]})=m_\om (\chi_{[.b]})
\end{eqnarray} 
holds for all $t\in \mathbb Z$ and  $b\in L_\om$.
\end{remark}

\vskip 1pc

\begin{lem}\label{trace on AF} 
Let 
$\rho: C^*(E_{\mathbb Z},\CL_\om,\bE_{\mathbb Z})^\gm\to C(\CO_\om)$ be 
the isomorphism given in {\rm (\ref{eqn AF iso})}. 
Then a $T$-invariant ergodic measure $m_\om$ on $\CO_\om$  
defines a tracial state
$$\tau_0:=m_\om\circ\rho: C^*(E_{\mathbb Z},\CL_\om,\bE_{\mathbb Z})^\gm\to \mathbb C$$ 
on the fixed point algebra 
$ C^*(E_{\mathbb Z},\CL_\om,\bE_{\mathbb Z})^\gm$ 
such that 
for $\af,  \bt\in  L_\om $, 
$$\tau_0(s_\af p_{r(\bt\af)} s_\af^*)=\tau_0 (p_{r(\bt\af)} ).$$  

\end{lem}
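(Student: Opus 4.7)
The plan is to verify two things separately: that $\tau_0$ is a tracial state, and that it has the claimed value on $p_{r(\bt\af)}$ compared to $s_\af p_{r(\bt\af)} s_\af^*$.

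For the first point, $\tau_0 = m_\om \circ \rho$ is a positive linear functional with $\tau_0(1) = m_\om(1) = 1$ since $\rho$ is a unital $*$-isomorphism and $m_\om$ is a probability measure. Because the fixed point algebra $C^*(E_{\mathbb Z},\CL_\om,\bE_{\mathbb Z})^\gm$ is commutative (as noted immediately after (\ref{fixed point})), every state on it is automatically a trace, so $\tau_0$ is tracial for free.

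For the identity $\tau_0(s_\af p_{r(\bt\af)} s_\af^*) = \tau_0(p_{r(\bt\af)})$, the strategy is to compute both sides through $\rho$ and then invoke $T$-invariance of $m_\om$ from Remark \ref{measure}. The left-hand side is immediate from Proposition \ref{AF isomorphism}: $\rho(s_\af p_{r(\bt\af)} s_\af^*) = \chi_{[\bt.\af]} = \chi_{{}_{-|\bt|}[\bt\af]}$. The right-hand side requires realizing $p_{r(\bt\af)}$ inside some $F_k$ so that the formula for $\rho$ applies. Using relation (iv) iteratively $|\bt\af|$ times, I would write
\begin{equation*}
p_{r(\bt\af)} \;=\; \sum_{\nu \in \CL_\om(r(\bt\af) E^{|\bt\af|})} s_\nu \, p_{r(\bt\af\nu)}\, s_\nu^* \;\in\; F_{|\bt\af|},
\end{equation*}
using that $r(r(\bt\af),\nu) = r(\bt\af\nu)$ (which follows from the weakly left-resolving condition applied to the chain graph $E_{\mathbb Z}$). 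Applying $\rho$ term by term,
\begin{equation*}
\rho(p_{r(\bt\af)}) \;=\; \sum_\nu \chi_{[\bt\af.\nu]} \;=\; \chi_{[\bt\af.]} \;=\; \chi_{{}_{-|\bt\af|}[\bt\af]},
\end{equation*}
where the collapse $\sum_\nu \chi_{[\bt\af.\nu]} = \chi_{[\bt\af.]}$ uses that the cylinders $[\bt\af.\nu]$ partition $[\bt\af.]$ as $\nu$ ranges over extensions in $L_\om$.

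Applying $m_\om$ to both sides and using the $T$-invariance identity (\ref{T-invariance}), both $m_\om(\chi_{{}_{-|\bt|}[\bt\af]})$ and $m_\om(\chi_{{}_{-|\bt\af|}[\bt\af]})$ equal $m_\om(\chi_{[.\bt\af]})$, so the desired equality follows. The only non-routine step is confirming the expansion of $p_{r(\bt\af)}$ lies in $F_{|\bt\af|}$ in the exact form $s_\nu p_{r(\nu' \nu)} s_\nu^*$ with $|\nu| = |\nu'|$; this is where I would take a moment to check that $\nu' = \bt\af$ has the correct length $|\bt\af| = |\nu|$, which is precisely why $k = |\bt\af|$ is the right choice of level.
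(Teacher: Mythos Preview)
Your proof is correct and follows essentially the same route as the paper's: compute $\rho$ on both $s_\af p_{r(\bt\af)} s_\af^*$ and $p_{r(\bt\af)}$ and then invoke $T$-invariance of $m_\om$. One small caveat: Proposition~\ref{AF isomorphism} states the formula $\rho(s_\af p_{r(\af'\af)} s_\af^*)=\chi_{[\af'.\af]}$ only for $|\af'|=|\af|$, so your ``immediate'' left-hand side actually needs the same kind of $F_k$-expansion you carry out for $p_{r(\bt\af)}$ (the paper does this via a short case split $|\bt|\gtrless|\af|$, cf.\ the proof of Proposition~\ref{AF structure}), but the outcome $\chi_{[\bt.\af]}$ is exactly as you claim.
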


\begin{proof}
Note that $p_{r(\bt\af)} =  \sum_{\nu} s_\nu p_{r(\bt\af\nu)}s_\nu^*$, where 
the sum is taken over the paths $\nu$ with $|\nu|=|\bt\af|$. 
We then have
$$\rho(p_{r(\bt\af)})
= \rho(\, \sum_{|\nu|=|\bt\af|} s_\nu p_{r(\bt\af\nu)}s_\nu^*)
= \sum_{|\nu|=|\bt\af|} \chi_{[\bt\af.\nu]}
= \chi_{\cup_\nu [\bt\af.\nu]}=\chi_{[\bt\af.]}.
$$
Thus $$\tau_0 (p_{r(\bt\af)})= m_\om(\chi_{[\bt\af.]}).$$  
On the other hand, 
if $|\bt\af|> 2|\af|$,  
$s_\af p_{r(\bt\af)} s_\af^*=
\sum_{|\nu|= |\bt|-|\af|}  s_{\af\nu}p_{r(\bt\af\nu)}s_{\af\nu}^*$ 
so that 
$$\tau_0 (s_\af p_{r(\bt\af)} s_\af^*)
=m_\om (\sum_{|\nu|= |\bt|-|\af| } \chi_{[\bt.\af\nu]})
=m_\om(\chi_{[\bt.\af]}).$$ 
But the equality $m_\om(\chi_{[\bt\af.]})=m_\om(\chi_{[\bt.\af]})$ 
follows from the fact that 
$m_\om$ is $T$-invariant (see (\ref{T-invariance})). 
The case where $|\bt\af|\leq 2|\af|$ can be done in a similar way.
\end{proof}

\vskip 1pc 

\begin{lem}\label{lemma for trace} 
The labeled graph $C^*$-algebra $C^*(E_{\mathbb Z},\CL_\om,\bE_{\mathbb Z})$ admits a tracial state 
$$\tau_0\circ\Psi: C^*(E_{\mathbb Z},\CL_\om,\bE_{\mathbb Z})\to \mathbb C,$$ 
where 
$\Psi: C^*(E_{\mathbb Z},\CL_\om,\bE_{\mathbb Z})\to 
C^*(E_{\mathbb Z},\CL_\om,\bE_{\mathbb Z})^\gm$ is the conditional expectation onto 
the fixed point algebra of the gauge action. 
\end{lem}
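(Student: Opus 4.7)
The plan is to verify separately that $\tau_0\circ\Psi$ is a state and that it is tracial, with essentially all the work concentrated in the latter.

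Since $\Psi$ is a unital positive linear map (a faithful conditional expectation onto the fixed point algebra, by Remark~\ref{review remarks}(iii)) and $\tau_0$ is a state on $C^*(E_{\mathbb Z},\CL_\om,\bE_{\mathbb Z})^\gm$ by Lemma~\ref{trace on AF}, the composition $\tau_0\circ\Psi$ is automatically a positive linear functional of norm one. The task is therefore to verify the tracial identity $\tau_0(\Psi(xy))=\tau_0(\Psi(yx))$ for all $x,y$, which by linearity and continuity reduces to spanning elements $x=s_\af p_A s_\bt^*$ and $y=s_\gm p_B s_\dt^*$ with $\af,\bt,\gm,\dt\in L_\om\cup\{\epsilon\}$ and $A,B\in\bE_{\mathbb Z}$.

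Under the gauge action, $\gm_z(x)=z^{|\af|-|\bt|}x$ and similarly for $y$, so $\Psi(xy)=\Psi(yx)=0$ unless the degrees cancel, i.e.\ $|\af|+|\gm|=|\bt|+|\dt|$; in the contrary case $xy$ and $yx$ are already gauge-invariant and fixed by $\Psi$. Assuming the degree equation, my approach is to rewrite both $xy$ and $yx$ in the canonical form $\sum_\mu s_\mu p_{C_\mu} s_\mu^*$ using the representation relations. The central computation is that $s_\bt^* s_\gm$ equals either $0$, $p_{r(\bt)}s_{\gm'}$ when $\gm=\bt\gm'$, or $s_{\bt'}^*p_{r(\gm)}$ when $\bt=\gm\bt'$; combined with the commutation $p_A s_a = s_a p_{r(A,a)}$ from relation (ii), this collapses $xy$ into the required form, and symmetrically for $yx$.

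With both products expressed in canonical form, Lemma~\ref{trace on AF} expresses $\tau_0(xy)$ and $\tau_0(yx)$ as sums of values $m_\om(\chi_{{}_t[\cdot]})$ on cylinder sets. The two sums will differ only in the time indices $t$ appearing in these cylinder sets, so the desired equality follows from the $T$-invariance identity~(\ref{T-invariance}). The principal obstacle is the bookkeeping required to reach the canonical forms: one must split into subcases depending on the relative sizes of $|\bt|$ versus $|\gm|$ and $|\dt|$ versus $|\af|$, and propagate the relative ranges carefully through the commutation relations. Once the canonical forms are in hand, however, the tracial identity is a direct consequence of the shift-invariance of $m_\om$.
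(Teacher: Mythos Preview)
Your proposal is correct and follows the same overall strategy as the paper: reduce to spanning elements, dispose of the non-gauge-invariant case via the degree condition, and invoke Lemma~\ref{trace on AF}. The one place where the paper is more efficient is the ``bookkeeping'' step you flag as the principal obstacle. Rather than splitting into subcases on $|\bt|$ versus $|\gm|$ and $|\dt|$ versus $|\af|$, the paper first uses relation~(iv) (i.e.\ $p_A=\sum_a s_a p_{r(A,a)} s_a^*$) to expand $X$ or $Y$ until the inner paths have equal length, reducing without loss of generality to $|\bt|=|\gm|$. Then $s_\bt^* s_\gm=\dt_{\bt,\gm}\,p_{r(\bt)}$, the degree equation forces $|\af|=|\dt|$, and the special feature of $E_{\mathbb Z}$ noted before~(\ref{fixed point})---that $s_\af p_C s_\dt^*\neq 0$ with $|\af|=|\dt|$ forces $\af=\dt$---collapses everything to the single identity $\tau_0(s_\af p_{A\cap B}s_\af^*)=\tau_0(p_{A\cap B})=\tau_0(s_\bt p_{A\cap B}s_\bt^*)$, which is exactly Lemma~\ref{trace on AF}. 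No further appeal to shift-invariance of $m_\om$ is needed at this stage (it is already baked into Lemma~\ref{trace on AF}). Your direct computation of $s_\bt^* s_\gm$ would of course reach the same conclusion, but the reduction to equal inner lengths is worth knowing as it eliminates the case analysis entirely.
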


\begin{proof}
To see that $\tau_0\circ\Psi$ is a trace, 
we claim  
\begin{eqnarray} \label{tracial}
\tau_0(\Psi(XY))=  \tau_0(\Psi(YX)) 
\end{eqnarray}
for  
$X,Y\in {\rm span}\{s_\af p_A s_\bt^* : \af,\bt\in  L_\om,\, 
A\in \bE_{\mathbb Z}, \, A\subset r(\af)\cap r(\bt)\}$. 
   Since the map $\tau_0\circ \Psi$ is linear, 
   we only need to show  (\ref{tracial}) for  
$X=s_\af p_A s_\bt^*$ and $Y=s_\mu p_B s_\nu^*$.
 But also by (\ref{representation}),
 it suffices to consider the case of $|\bt|=|\mu|$, so that 
$XY=\delta_{\bt,\mu} s_\af p_{A\cap B} s_\nu^*$. 
  In this case if $|\af|\neq |\nu|$, then $\Psi(XY)=\Psi(YX)=0$ follows immediately. 
Hence now let $|\af|=|\nu|$. 
If $\af\neq \nu$, it is easy to see that $XY=YX=0$ and (\ref{tracial}) holds. 
 If  $\af=\nu$, then 
 $YX= s_\bt p_{B\cap A} s_\bt^*$ and  $XY=s_\af p_{A\cap B} s_\af^*$, 
 and by Lemma~\ref{trace on AF} we have   
$$\tau_0(\Psi(XY))=\tau_0(XY)=\tau_0(s_\af p_{A\cap B} s_\af^*) 
=\tau_0(p_{A\cap B})= \tau_0(\Psi(YX)).$$ 
 The fact that $\tau_0\circ \Psi$ is a state comes from  
$$ (\tau_0\circ \Psi)(1)=\tau_0 \big(\sum_{a,b\in\CA} s_b p_{r(ab)}s_b^*\big) 
 = m_\om (\sum_{a,b\in\CA } \chi_{[a.b]})= m_\om (\chi_{\CO_\om}) 
 =1.$$
\end{proof}

\vskip 1pc
To prove the simplicity of the labeled graph $C^*$-algebra 
$C^*(E_{\mathbb Z}, \CL_\om, \bE_{\mathbb Z})$, 
we need the following lemma which might be well known in the theory of 
dynamical systems, but we provide a proof here for the reader's 
convenience. 
  
\vskip 1pc

\begin{lem}\label{disagreeable} 
Let $\om\in \CA^{\mathbb Z}$ be a sequence which is 
almost periodic but not periodic. 
Then the labeled space $(E_{\mathbb Z}, \CL_\om, \bE_{\mathbb Z})$ is 
disagreeable.
\end{lem}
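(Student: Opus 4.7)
I would argue by contradiction: assume some generalized vertex $[v_j]_l$ fails to be disagreeable, and deduce that $\om$ must be periodic. My first step is to translate the labeled-graph data into subshift data. Because $E_{\mathbb Z}$ carries a unique directed path of each length ending at any given vertex, the class $[v_k]_l$ is determined entirely by the block $b := \om_{[j-l, j-1]}$: a vertex $v_k$ lies in $[v_j]_l$ precisely when $\om_{[k-l, k-1]} = b$. Consequently $\CL_\om([v_j]_l E_{\mathbb Z}^{\geq n})$ becomes the set of right continuations of length at least $n$ of the occurrences of $b$ in $\om$, while $\CL_\om([v_j]_l E_{\mathbb Z}^{\leq l})$ is a finite subset of $L_\om$.

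Next I carry out the combinatorial heart of the proof. The negation of disagreeability at $[v_j]_l$, together with the monotonicity $\CL_\om([v_j]_l E_{\mathbb Z}^{\geq n'}) \subset \CL_\om([v_j]_l E_{\mathbb Z}^{\geq n})$ for $n' \geq n$, produces some $n_0$ such that every path in $\CL_\om([v_j]_l E_{\mathbb Z}^{\geq n_0})$ is agreeable. Fixing one occurrence of $b$ ending at position $p - 1$, for every $m \geq n_0$ the word $\om_{[p, p+m-1]}$ admits a decomposition $\bt_m^{k_m}\bt_m'$ with $\bt_m \in \CL_\om([v_j]_l E_{\mathbb Z}^{\leq l})$, $k_m \geq 1$, and $\bt_m'$ an initial subpath of $\bt_m$. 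Since the set of possible $\bt_m$ is finite, a pigeonhole argument yields a single $\bt$ with $|\bt| \leq l$ such that $\om_{[p, p+m-1]}$ is a prefix of $\bt^\infty$ for arbitrarily large $m$; hence the one-sided tail $\om_{[p, \infty)}$ is literally equal to $\bt^\infty$.

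Finally I close the argument by showing that a purely periodic right tail is incompatible with $\om$ being almost periodic but not periodic. Each shifted sequence $T^n\om$ is $|\bt|$-periodic on the interval $[p - n, \infty)$, so after passing to a convergent subsequence $T^{n_k}\om \to y$ in $\CA^{\mathbb Z}$ with $n_k$ constant modulo $|\bt|$, the limit $y \in \CO_\om$ is a bi-infinite $|\bt|$-periodic point. Almost periodicity of $\om$ makes $(\CO_\om,T)$ minimal, so $\CO_\om = \overline{\{T^i y : i \in \mathbb Z\}}$ is a finite orbit, forcing $\om$ itself to be periodic and contradicting the hypothesis. The step that I expect to demand the most care is the pigeonhole extraction in the middle paragraph: the decomposition $\bt_m^{k_m}\bt_m'$ is not canonical, so I must argue that a common choice of $\bt$ works cofinally in $m$ in such a way that taking $m \to \infty$ identifies $\om_{[p,\infty)}$ with $\bt^\infty$; the dynamical closing step is then routine.
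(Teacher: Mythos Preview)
Your argument is correct, but it takes a different route from the paper's. The paper invokes the criterion from \cite{JKK} that a labeled space is disagreeable whenever it has no repeatable paths, and then shows directly that a repeatable path $\af$ forces $\om$ to be periodic: almost periodicity makes every factor of $\om$ appear inside some power $\af^k$, so $\om_{[0,\infty)}=\af''\af^\infty$ and, by the same reasoning on the left, $\om=\af^\infty\af'.\af''\af^\infty$. No compactness or limit argument is needed. By contrast, you work straight from the definition of disagreeability, use pigeonhole on the finitely many $\bt\in\CL_\om([v_j]_l E_{\mathbb Z}^{\leq l})$ to produce a purely periodic right tail $\om_{[p,\infty)}=\bt^\infty$, and then close with a dynamical step (extracting a periodic limit point in $\CO_\om$ and invoking minimality). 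Your approach is self-contained and avoids citing the repeatable-path criterion, at the cost of the extra compactness/minimality argument; the paper's proof is shorter once that criterion is in hand and yields two-sided periodicity of $\om$ directly.
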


\begin{proof}
It is enough to show that the labeled space has no repeatable paths 
(see Remark~\ref{repeatable}).
For this, suppose there is a repeatable path  $\af$. 
We may assume that $\af$ has the smallest length. 
If $\bt\in L_\om$,  by the assumption that $\om$ is almost periodic, 
there exists a $d\geq 1$ such that 
every block $\om_{[t,t+d]}$, $t\in \mathbb Z$,  
has $\bt$ as its factor. 
Thus any path $\af^k\in L_\om$, $k$ large enough, has 
$\bt$ as a factor, so that $\af^k=\mu\bt\nu$ for some 
$\mu, \nu\in L_\om\cup\{\epsilon\}$. 
In other words, every $\bt\in L_\om$ must be of the form  
$\bt=\af''\af^l\af'$ for an initial path $\af'$ and 
terminal path $\af''$ of $\af$ and $l\geq 0$. 

Now we can apply this fact to the paths $\bt=\om_{[0,n]}$, $n\geq 1$, 
to obtain that $\om_{[0,\infty)}$ is of the form $\af''\af^\infty$. 
But then, considering the blocks of the form $\om_{[-n,n]}\in L_\om$ 
($n\to \infty$)
we can easily see that 
$\om=(\af)^\infty\af'. \af''(\af)^\infty$, 
where $\af=\af'\af''$. Thus $\om$ is periodic, which is a contradiction. 
\end{proof}

\vskip 1pc

Since we assume that a Cantor system $(\CO_\om,T)$ is minimal, or 
equivalently $\om$ is almost periodic (and not periodic), 
the labeled space $(E_{\mathbb Z}, \CL_\om, \bE_{\mathbb Z})$ considered in 
this section is 
always disagreeable by Lemma~\ref{disagreeable}.

The following theorem shows that there exist 
simple labeled graph $C^*$-algebras that are not 
stably isomorphic to simple graph $C^*$-algebras.
 
\vskip 1pc 

\begin{thm}\label{main thm 1}
Let $\CA$ be a finite alphabet with $|\CA|\geq 2$, 
and let $\om\in \CA^{\mathbb Z}$ be a sequence such that 
the subshift $(\CO_\om,T)$ is a Cantor minimal system. 
Then the labeled graph $C^*$-algebra
$C^*(E_{\mathbb Z},\CL_\om,\bE_{\mathbb Z})$ is a  
non-AF simple unital $C^*$-algebra with a tracial state  
 $\tau$ 
which satisfies 
$$\tau(s_\af p_{r(\nu\af)}s_\bt^*)
=\tau\circ \Psi(s_\af p_{r(\nu\af)}s_\bt^*)=\delta_{\af,\bt}\tau(p_{r(\nu\af)})$$ 
for labeled paths $\af, \bt, \nu\in \CL_\om (E_{\mathbb Z}^{\geq 1})$.
Moreover if the system  $(\CO_\om, T)$ is uniquely ergodic, 
$C^*(E_{\mathbb Z},\CL_\om,\bE_{\mathbb Z})$ has a unique tracial state.
\end{thm}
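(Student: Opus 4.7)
The plan is to establish the four assertions---unitality, simplicity, existence of the tracial state $\tau$ with the stated formula, and non-AFness---and then the unique-trace claim under unique ergodicity, roughly in that order. Unitality is a quick observation: finiteness of $\CA$ forces the relation $\sim_1$ on $E_{\mathbb Z}^0$ to have only finitely many classes (each $[v]_1$ is determined by the finite subset $\CL_\om(E_{\mathbb Z}^{\leq 1}v) \subseteq \CA$), so $E_{\mathbb Z}^0$ is itself a finite union of generalized vertices and, by Proposition~\ref{prop-barE}, lies in $\bE_{\mathbb Z}$; the unit is $p_{E_{\mathbb Z}^0}$. For simplicity, given a nonzero ideal $I$, faithfulness of the conditional expectation $\Psi$ (Remark~\ref{review remarks}(iii)) makes $\Psi(I)$ a nonzero ideal of the fixed-point algebra, corresponding via $\rho$ (Proposition~\ref{AF isomorphism}) to $C_0(U)$ for some nonempty open $U \subseteq \CO_\om$. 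Since $\gm_z(s_a^* x s_a) = s_a^* x s_a$ for $x \in C^*(E_{\mathbb Z},\CL_\om,\bE_{\mathbb Z})^\gm$, $\Psi(I)$ is stable under the conjugations $x \mapsto s_a^* x s_a$ and $x \mapsto s_a x s_a^*$; a direct calculation using the defining relations $p_A s_a = s_a p_{r(A,a)}$ and $s_a s_\af = s_{a\af}$ shows that under $\rho$ these conjugations implement $T$ and $T^{-1}$ on the cylinder generators of $C(\CO_\om)$. Hence $U$ is $T$-invariant, and minimality forces $U = \CO_\om$, so $1 \in I$.

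The tracial state and its formula fall out of the lemmas already in the excerpt. Krylov--Bogolyubov supplies a $T$-invariant Borel probability measure $m_\om$ on $\CO_\om$; Lemma~\ref{trace on AF} produces the tracial state $\tau_0 = m_\om \circ \rho$ on the fixed-point algebra, and Lemma~\ref{lemma for trace} upgrades it to $\tau := \tau_0 \circ \Psi$ on $C^*(E_{\mathbb Z},\CL_\om,\bE_{\mathbb Z})$. The formula is then immediate: $\Psi$ annihilates $s_\af p_{r(\nu\af)} s_\bt^*$ when $|\af| \neq |\bt|$ by gauge-equivariance, and for $|\af| = |\bt|$ the fact that each vertex of $E_{\mathbb Z}$ has a unique incoming labeled path of each length forces $r(\af) \cap r(\bt) = \emptyset$ (and so $s_\af p_A s_\bt^* = 0$) unless $\af = \bt$; the remaining diagonal terms are evaluated by Lemma~\ref{trace on AF}.

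For non-AFness, the cleanest route is to invoke Theorem~\ref{main thm 2} together with \cite[Theorem~1.4]{HPS}, which gives $K_1(C^*(E_{\mathbb Z},\CL_\om,\bE_{\mathbb Z})) \cong K_1(C(\CO_\om)\times_T \mathbb Z) = \mathbb Z$, excluding AF. For uniqueness of the trace under unique ergodicity, the plan has two ingredients: first, every tracial state on $C^*(E_{\mathbb Z},\CL_\om,\bE_{\mathbb Z})$ is automatically gauge-invariant, i.e., $\tau = \tau \circ \Psi$; second, any gauge-invariant trace restricts via $\rho$ to a $T$-invariant probability measure on $\CO_\om$ (a short trace-property computation on the fixed-point generators $s_\af p_{r(\bt\af)} s_\af^*$), which unique ergodicity pins down as $m_\om$. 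The gauge-invariance step is the main technical obstacle of the theorem, and I would prove it by reducing (via the trace property and $s_\bt^* s_\af = \delta s_\gm p_{r(\af)}$ when $\af = \bt\gm$) to showing $\tau(s_\gm p_A) = 0$ for every $|\gm| \geq 1$ and $A \in \bE_{\mathbb Z}$; iterating the identity $\tau(s_\gm p_A) = \tau(p_A s_\gm) = \tau(s_\gm p_{r(A,\gm)})$ yields $\tau(s_\gm p_A) = \tau(s_\gm p_{r(A,\gm^n)})$ for every $n$, and the absence of repeatable paths (Lemma~\ref{disagreeable}) forces $r(A,\gm^n) = \emptyset$ for $n$ large enough, delivering the vanishing.
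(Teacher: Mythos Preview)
Your trace material is essentially the paper's argument: the construction $\tau=\tau_0\circ\Psi$ via Lemmas~\ref{trace on AF} and~\ref{lemma for trace}, the vanishing of off-diagonal terms from left-resolvingness of $(E_{\mathbb Z},\CL_\om)$, and the gauge-invariance step for uniqueness are all the same in spirit. Your iteration $\tau'(s_\gm p_A)=\tau'(s_\gm p_{r(A,\gm^n)})$ followed by ``no repeatable paths $\Rightarrow r(\gm^n)=\emptyset$'' is a clean repackaging of the paper's contradiction with disagreeability. For non-AFness the paper argues internally, noting that $\chi_{E_{\mathbb Z}^0}\in\ker(1-\Phi)$ so that $K_1\neq 0$ by \eqref{K1}; your route through Theorem~\ref{main thm 2} is logically permissible (that theorem only uses the simplicity part of the present one) but is a forward reference the paper avoids.

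The simplicity argument, however, has a genuine gap. You correctly observe that $\Psi(I)$ is nonzero (faithfulness of $\Psi$), that it is an ideal of the fixed-point algebra (since $\Psi(by)=b\Psi(y)$ for gauge-fixed $b$), and that it is stable under the $s_a$-conjugations, so its closure corresponds under $\rho$ to $C_0(U)$ with $U$ open, nonempty, and $T$-invariant; minimality then forces $U=\CO_\om$. But from $\overline{\Psi(I)}=C^*(E_{\mathbb Z},\CL_\om,\bE_{\mathbb Z})^\gm$ you \emph{cannot} conclude $1\in I$: the conditional expectation does not map $I$ into $I$ unless $I$ is already gauge-invariant, so knowing $1\in\overline{\Psi(I)}$ only produces $y_n\in I$ with $\Psi(y_n)\to 1$, not an element of $I$ near $1$. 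What is missing is either a Cuntz--Krieger style compression showing directly that $I\cap C^*(E_{\mathbb Z},\CL_\om,\bE_{\mathbb Z})^\gm\neq 0$ (after which your minimality argument would finish), or an appeal to the uniqueness theorem for disagreeable labeled spaces. The paper takes the second route: by Lemma~\ref{disagreeable} the space is disagreeable, so \cite[Theorem~5.5]{BP2} (cf.\ Remark~\ref{repeatable}) reduces simplicity to checking that every nonzero quotient map $\pi$ satisfies $\pi(p_{[v]_l})\neq 0$ for all generalized vertices; the paper then uses almost periodicity of $\om$ to show that if $\pi(p_{r(\af)})=0$ for some $\af$, then every long enough block $\bt$ contains $\af$ as a factor, whence $\pi(p_{r(\bt)})=0$ and $\pi=0$.
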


\begin{proof} 
For the simplicity of  
$C^*(E_{\mathbb Z},\CL_\om,\bE_{\mathbb Z})=C^*(p_A,s_a)$, 
we show that any nonzero homomorphism  
$\pi:C^*(E_{\mathbb Z},\CL_\om,\bE_{\mathbb Z})\to C^*(q_A, t_a)$ 
onto a $C^*$-algebra  generated by  
$q_A:=\pi(p_A),\ t_a:=\pi(s_a)$ for $ A\in \bE_{\mathbb Z}$, 
$a\in \CA$, is faithful. 
Since the labeled space  
$(E_{\mathbb Z},\CL_\om,\bE_{\mathbb Z})$ is disagreeable 
by Lemma~\ref{disagreeable}, 
we see from \cite[Theorem 5.5]{BP2} that 
$\pi$ is faithful whenever 
$\pi(p_{[v]_l})\neq 0$ for all $v\in E^0$ and $l\geq 1$.
Suppose on the contrary that
$$q_{[v]_m}=\pi(p_{[v]_m})=0$$ 
for some $[v]_m=r(\af)$ with $|\af|=m$. 
Since $\af\in L_\om$ and  $\om$ is almost periodic, 
one finds a $d\geq 1$ such that for all $s\geq 0$, 
$$T^{s+j}\om \in [.\af]$$ 
for some $0\leq j\leq d$. 
This means that if  $\bt\in L_\om$ is a block 
with length $|\bt|\geq d$, it must have $\af$ as a factor.
Thus $\bt$ must be of the form $\bt=\bt'\af\bt''$ 
for some $\bt',\bt''\in \CL_\om^\sharp (E)\,(=L_\om \cup\{\epsilon\})$. 
For $\bt$  with $|\bt|\geq d$  we have $q_{r(\bt)}=0$.  
In fact, 
\begin{align*} 
q_{r(\bt)} & \ =  q_{r(\bt'\af\bt'')}=q_{r(r(\bt'\af),\bt'')}\\ 
           & \ = q_{r(r(\bt'\af),\bt''))}t_{\bt''}^*t_{\bt''}q_{r(r(\bt'\af),\bt'')}\\
           & \ \sim t_{\bt''}q_{r(r(\bt'\af),\bt'')}t_{\bt''}^*\\
           & \ \leq q_{r(\bt'\af)}\leq q_{r(\af)}\\
           & \ = q_{[v]_m}=0.
\end{align*}           
On the other hand, since $\pi$ is a nonzero homomorphism, 
there exists a  
$\dt\in  L_\om$ with $q_{r(\dt)}=\pi(p_{r(\dt)})\neq 0$. 
But then, with an $n>\max\{ |\dt|,d\}$, we have   
$$q_{r(\dt)}=\pi(p_{r(\dt)})
=\pi\big(\sum_{|\dt\mu_i|=n} s_{\mu_i}p_{r(\dt\mu_i)}s_{\mu_i}^* \big)
= \sum_{|\dt\mu_i|=n} t_{\mu_i}q_{r(\dt\mu_i)}t_{\mu_i}^* =0,$$
a contradiction, and 
$C^*(E_{\mathbb Z}, \CL_\om,\bE_{\mathbb Z})$ is simple.
 
  With $\bE_{\mathbb Z}$ in place of $\CB$  in (\ref{K1}) 
it is rather obvious that $\mathcal N=\emptyset$ and $\hat\CB=\hat \CB_J=\bE_{\mathbb Z}$.  
Since 
$\chi_A\in  \ker(1-\Phi)$ if and only if 
$\chi_A=\sum_{a\in \CA} \chi_{r(A,a)}$ (see (\ref{Phi})) which  
actually holds for $A=E_{\mathbb Z}^0$, we have  
$K_1(C^*(E_{\mathbb Z},\CL_\om,\bE_{\mathbb Z}))
= {\rm ker}(1-\Phi)\neq 0$.
Thus $C^*(E_{\mathbb Z},\CL_\om,\bE_{\mathbb Z})$ is not AF. 
(We will see later from 
Theorem~\ref{main thm 2} that 
 $K_1(C^*(E_{\mathbb Z},\CL_\om,\bE_{\mathbb Z}))=\mathbb Z$.)

If $\tau_0$ is the tracial state of 
$C^*(E_{\mathbb Z},\CL_\om,\bE_{\mathbb Z})^\gm$ induced by 
an ergodic measure of $(\CO_\om, T)$, 
the tracial state  
$\tau:=\tau_0\circ \Psi:C^*(E_{\mathbb Z},\CL_\om,\bE_{\mathbb Z})\to \mathbb C$ 
of Lemma~\ref{lemma for trace} satisfies 
\begin{eqnarray}\label{trace condition}
\tau(s_\af p_{r(\nu\af)} s_\bt^*)=\delta_{\af,\bt}\tau(p_{r(\nu\af)})
\end{eqnarray}
for 
$s_\af p_{r(\nu\af)}s_\bt^* \in C^*(E_{\mathbb Z},\CL_\om,\bE_{\mathbb Z})$. 

Now let $(\CO_\om, T)$ be uniquely ergodic and 
again let $\tau_0$ be the tracial state on 
the fixed point algebra 
$C^*(E_{\mathbb Z},\CL_\om,\bE_{\mathbb Z})^\gm$ and 
$\tau:=\tau_0\circ \Psi$ the extension of $\tau_0$ to the 
whole labeled graph $C^*$-algebra as before. 
To show that $\tau$ is the unique tracial state on 
$C^*(E_{\mathbb Z},\CL_\om,\bE_{\mathbb Z})$, 
we claim that  if $\tau'$ is a tracial state on 
$C^*(E_{\mathbb Z},\CL_\om,\bE_{\mathbb Z})$, then $\tau'\circ\Psi=\tau'$ holds, and that 
the state $\tau'\circ \rho^{-1}$ on $C(\CO_\om)$ is $T$-invariant.  
For the first claim, suppose $\tau'\circ\Psi\neq\tau'$. Then 
there exists an element $s_\af p_{r(\af)} s_\bt^*$ ($|\bt|<|\af|$) such that 
$\tau'(s_\af p_{r(\af)} s_\bt^*)\neq 0$. 
Since $\tau'$ is tracial,  we have 
$0\ne \tau'(s_\af p_{r(\af)} s_\bt^*)= \tau'(s_\bt^*s_\af p_{r(\af)})$. 
Thus $\af$ must be of the form $\af=\bt\af'$ for some path $\af'$, and then  
$0\neq \tau'(s_\bt^*s_\af p_{r(\af)})=\tau'(s_{\af'} p_{r(\af)})$. 
Again the tracial property of $\tau'$ gives 
$$0\neq \tau'(s_{\af'} p_{r(\af)})=\tau'( p_{r(\af)} s_{\af'})=
\tau'(s_{\af'} p_{r(\af\af')})=\cdots = \tau'(s_{\af'} p_{(r(\af), (\af')^n)})$$ 
for all $n\geq 1$. 
But this means that the generalized vertex $[v]_l:=r(\af)$, $l=|\af|$, 
is not disagreeable emitting only agreeable paths, 
which is a contradiction to Lemma~\ref{disagreeable}.  
To see that the state $\tau'\circ \rho^{-1}:C(\CO_\om)\to \mathbb C$ is $T$-invariant, 
let $\chi_{{}_t[\bt]}\in C(\CO_\om)$. We assume $t>0$. 
Since $$\rho^{-1}(\chi_{{}_t[\bt]})
=\rho^{-1}\big(\sum_{\substack{\af,\bt\\ |\af|=|\sm\bt|=t+|\bt|}} \chi_{[\af.\sm\bt]}\,\big) 
=\sum_{\substack{\af,\bt\\ |\af|=|\sm\bt|=t+|\bt|}} s_{\sm\bt} p_{r(\af\sm\bt)}s_{\sm\bt}^*,$$ 
we have
$\displaystyle\tau'(\rho^{-1}(\chi_{{}_t[\bt]}))
=\tau'\big( \sum_{\substack{\af,\bt\\ |\af|=|\sm\bt|
=t+|\bt|}} p_{r(\af\sm\bt)}\,\big)=\tau'(p_{r(\bt)}).$  
This implies that 
$$\tau'\circ\rho^{-1}(\chi_{{}_t[\bt]})=\tau'\circ\rho^{-1}(\chi_{{}_{t+1}[\bt]})
=\tau'\circ\rho^{-1}(\chi_{{}_t[\bt]}\circ T),$$ 
which can also be shown for $t\leq 0$ in a similar way. 
Thus $\tau'\circ\rho^{-1}$ is $T$-invariant because the span of functions 
$\chi_{{}_t[\bt]}$ is dense in $C(\CO_\om)$.
\end{proof} 
 
\vskip 1pc 
 
\begin{remarks} 
(1) Simplicity of $C^*(E_{\mathbb Z},\CL_\om,\bE_{\mathbb Z})$ 
can also  be shown by analyzing the path structure of the labeled space 
$(E_{\mathbb Z},\CL_\om,\bE_{\mathbb Z})$. 
For a labeled graph $(E,\CL)$, set 
$$\overline{\CL(E^\infty)}:=\{x\in \CA^{\mathbb N}\mid 
x_{[1,n]}\in \CL(E^n) \ \text{for all } n\geq 1\}.$$ 
Then $\CL(E^\infty)\subset \overline{\CL(E^\infty)}$ is obvious, but 
it is possible to have $\CL(E^\infty)\subsetneq \overline{\CL(E^\infty)}$.     
For example, if  $\om= 0^\infty. 0101^201^301^4\cdots\in \{0,1\}^{\mathbb Z}$, 
then 
the path $1^\infty\in \overline{\CL_\om(E_{\mathbb Z})}$ does not appear 
as an infinite labeled path in $\CL_{\om}(E_{\mathbb Z}^\infty)$.  
 We say that a labeled space $(E,\CL,\bE)$ is {\it strongly cofinal} if 
for each $x\in \overline{\CL(E^\infty)}$ and $[v]_l\in \bE$, there exist 
an $N\geq 1$ and a finite number of paths $\ld_1, \dots, \ld_m\in \CL(E^{\geq 1})$ such that 
$$r(x_{[1,N]})\subset \cup_{i=1}^m r([v]_l,\ld_i).$$ 
This definition of strong cofinality is a modification 
of the definitions given in \cite{BP2, JK} and the proof of \cite[Theorem 6.4]{BP2} 
can be slightly modified to prove that if 
 $(E,\CL,\bE)$ is strongly cofinal and disagreeable, the $C^*$-algebra 
 $C^*(E,\CL,\bE)$ is simple. 
If $\om$ is a sequence satisfying the assumption of this section, 
it is not hard to see that the labeled space 
$(E_{\mathbb Z}, \CL_\om, \bE_{\mathbb Z})$ is 
strongly cofinal.  
Then by Lemma~\ref{disagreeable}, we know that 
$C^*(E_{\mathbb Z},\CL_\om,\bE_{\mathbb Z})$ is simple.

(2)  In case $\om$ is the Thue Morse sequence given in 
Example~\ref{Thue Morse},
one can directly show  that the simple unital $C^*$-algebra 
$C^*(E_{\mathbb Z},\CL_\om,\bE_{\mathbb Z})$ 
admits a unique tracial state. 
Moreover,  its exact values on typical elements of the form $s_\af p_A s_\bt^*$ 
can be obtained explicitly, which will be done in \cite{Ki}.
\end{remarks}
 
\vskip 1pc 

\begin{remark} 
If  $(X,T)$ is a Cantor  minimal system, 
$T$ induces an automorphism $T$ of $C(X)$, 
$$T(f)=f\circ T^{-1},\ \ f\in C(X),$$ 
and 
it is well known that the crossed product $C(X)\times_{T} \mathbb Z$ 
is always simple (for example, see \cite{Da}). 
It is also known \cite{GPS} that 
the crossed products $C(X)\times_{T} \mathbb Z$ are not AF 
because $K_1(C(X)\times_{T} \mathbb Z)=\mathbb Z$.
 But they are all $A\mathbb T$ algebras, hence 
 finite algebras of stable rank one, and have real rank zero by \cite{BBEK}. 
Moreover their isomorphism classes are determined by the ordered $K_0$-groups 
$$(K_0(C(X)\times_{T} \mathbb Z), K_0^+(C(X)\times_{T} \mathbb Z), [1]_0)$$  
together with the distinguished order units $[1]_0$, where $1$ is the unit projection 
of the crossed product. 
\end{remark} 

\vskip 1pc
If a Cantor minimal system 
$(\CO_\om, T)$ is uniquely ergodic,  
the following theorem implies together with Theorem~\ref{main thm 1} 
that the crossed product $C(\CO_\om) \times_{T} \mathbb Z$ 
has a unique tracial state, which is well known for 
uniquely ergodic minimal systems $(X, T)$ of infinite 
spaces $X$ (see \cite[Corollary VIII.3.8]{Da}). 

\vskip 1pc

\begin{thm}\label{main thm 2} 
Let $\CA$ be a finite alphabet with $|\CA|\geq 2$, 
and let $\om\in \CA^{\mathbb Z}$ be a sequence such that 
the subshift $(\CO_\om,T)$ is a Cantor minimal system. 
Then there is an isomorphism   
$$\pi: C^*(E_{\mathbb Z}, \CL_\om,\bE_{\mathbb Z})\to 
C(\CO_\om) \times_{T} \mathbb Z$$ 
such that the restriction of $\pi$ onto 
the fixed point algebra $C^*(E_{\mathbb Z}, \CL_\om,\bE_{\mathbb Z})^\gm$
of the gauge action $\gm$ is an isomorphism onto 
$C(\CO_\om)$.
\end{thm}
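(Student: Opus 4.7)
The plan is to use the universal property of $C^*(E_{\mathbb Z},\CL_\om,\bE_{\mathbb Z})$: I will exhibit a family of generators inside $C(\CO_\om)\times_T\mathbb Z$ satisfying the defining relations of a representation of the labeled space, extract $\pi$ from universality, and then check that it is bijective with its restriction to the fixed point algebra coinciding with the isomorphism $\rho$ of Proposition~\ref{AF isomorphism}. Let $u$ denote the canonical unitary in $C(\CO_\om)\times_T\mathbb Z$ implementing $T$, so that $ufu^* = f\circ T^{-1}$ for $f\in C(\CO_\om)$. For each letter $a\in\CA$ occurring in $\om$, set
\[
t_a := \chi_{[.a]}\,u^* \;=\; u^*\chi_{[a.]}
\]
(the two expressions agree because $u^*\chi_{[a.]}u = \chi_{T^{-1}([a.])} = \chi_{[.a]}$), and set $t_a:=0$ otherwise. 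For $A\in\bE_{\mathbb Z}$, written as a finite union $\cup_i r(\bt_i)$ via Proposition~\ref{prop-barE}, define $q_A:=\rho(p_A)=\chi_{\cup_i[\bt_i.]}\in C(\CO_\om)$.

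The first concrete task is to verify that $\{t_a,q_A\}$ is a representation of $(E_{\mathbb Z},\CL_\om,\bE_{\mathbb Z})$ in the sense of Remark~\ref{review remarks}(i). Relation (i) is immediate from the set operations on cylinders; relation (iii) holds because $t_a^*t_a=\chi_{[a.]}=q_{r(a)}$ and $t_a^*t_b=0$ for $a\ne b$ (since $[a.]\cap[b.]=\emptyset$). Relations (ii) and (iv) reduce by linearity (and weak left-resolving, giving $r(A_1\cap A_2,a)=r(A_1,a)\cap r(A_2,a)$) to the case $A=r(\bt)$, where the essential computations are
\[
q_{r(\bt)}\,t_a \;=\; \chi_{[\bt.]\cap[.a]}\,u^* \;=\; \chi_{[\bt.a]}\,u^* \;=\; t_a\,q_{r(\bt a)},
\]
\[
\sum_{a:\,\bt a\in L_\om} t_a\,q_{r(\bt a)}\,t_a^* \;=\; \sum_a\chi_{[\bt.a]} \;=\; \chi_{[\bt.]} \;=\; q_{r(\bt)},
\]
using the disjoint clopen partition of $[\bt.]$ by the sets $[\bt.a]$. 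Universality then yields a $*$-homomorphism $\pi$ with $\pi(s_a)=t_a$ and $\pi(p_A)=q_A$. Injectivity is free: $\pi(p_{r(a)})=\chi_{[a.]}\ne 0$, so $\pi$ is nonzero, and simplicity of $C^*(E_{\mathbb Z},\CL_\om,\bE_{\mathbb Z})$ from Theorem~\ref{main thm 1} forces $\pi$ to be faithful.

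For surjectivity, summing over $a\in\CA$ gives $\sum_a t_a=\bigl(\sum_a\chi_{[.a]}\bigr)u^*=u^*$, so $u$ lies in the image; meanwhile the image already contains every $\chi_{[\bt.]}$, and conjugating by powers of $u$ recovers $\chi_{{}_t[b]}$ for every $t\in\mathbb Z$ and $b\in L_\om$. Such characteristic functions span a dense $*$-subalgebra of $C(\CO_\om)$, and since $u$ together with $C(\CO_\om)$ generate the crossed product, $\pi$ is surjective. A short induction shows $\pi(s_\af)=\chi_{[.\af]}u^{*|\af|}$, whence
\[
\pi(s_\af p_{r(\bt\af)}s_\af^*) \;=\; \chi_{[\bt.\af]} \;=\; \rho(s_\af p_{r(\bt\af)}s_\af^*)
\]
whenever $|\af|=|\bt|$; thus $\pi$ restricts to $\rho$ on $C^*(E_{\mathbb Z},\CL_\om,\bE_{\mathbb Z})^\gm$, giving the second assertion. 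The main delicate point is the bookkeeping around cylinder-set identities and the conventions for $u$; once these are straight, the remaining steps amount to invoking universality, the simplicity result of Theorem~\ref{main thm 1}, and the already-established description of the fixed point algebra in Proposition~\ref{AF isomorphism}.
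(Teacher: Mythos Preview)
Your proof is correct and follows essentially the same strategy as the paper's: build a representation $\{t_a,q_A\}$ of the labeled space inside the crossed product with $t_a=u^*\chi_{[a.]}$, invoke universality to obtain $\pi$, deduce injectivity from the simplicity established in Theorem~\ref{main thm 1}, and get surjectivity from $\sum_a t_a=u^*$. The only cosmetic difference is that you work directly in $C(\CO_\om)\times_T\mathbb Z$ via the identification $q_A=\rho(p_A)$, whereas the paper transports $T$ along $\rho$ to an automorphism $T'=\rho^{-1}\circ T\circ\rho$ of the fixed point algebra and works in $C^*(E_{\mathbb Z},\CL_\om,\bE_{\mathbb Z})^\gm\rtimes_{T'}\mathbb Z$; the two pictures are equivalent under $\rho$.
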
 

\begin{proof}
Proposition~\ref{AF isomorphism} (and its proof) says that 
the fixed point algebra $C^*(E_{\mathbb Z}, \CL_\om,\bE_{\mathbb Z})^\gm$ is 
isomorphic to $C(\CO_\om)$ via the map $\rho$ 
 given by 
$$\rho(s_\af p_{r(\bt\af)}s_\af^*)=\chi_{[\bt.\af]},\ \ 
   \af,\bt\in \CL_\om^\sharp (E_{\mathbb Z}).$$ 
We show that there exists an isomorphism 
of $C^*(E_{\mathbb Z}, \CL_\om,\bE_{\mathbb Z})$ onto 
the crossed product 
$C^*(E_{\mathbb Z}, \CL_\om,\bE_{\mathbb Z})^\gm\rtimes_{T'} \mathbb Z$,
 where 
 $T':=\rho^{-1}\circ T\circ \rho$ is the automorphism  of 
$C^*(E_{\mathbb Z}, \CL_\om,\bE_{\mathbb Z})^\gm$.  
 
  Note first that  $T'$ satisfies the following 
\begin{eqnarray}\label{T'} 
T'(s_\af p_{r(\bt\af)}s_\af^*)=s_{\af_2\cdots\af_n}p_{r(\bt\af)}s_{\af_2\cdots\af_n}^*
\end{eqnarray} 
for $\af,\bt\in \CL_\om^\sharp (E_{\mathbb Z})$. 
In fact, 
$
\rho\big(T'(s_\af p_{r(\bt\af)}s_\af^*)\big) =T(\rho(s_\af p_{r(\bt\af)}s_\af^*)) 
    = T(\chi_{[\bt.\af]})  
    =\chi_{T([\bt.\af])} 
    =\chi_{[\bt\af_1.\af_2\cdots \af_n]} 
     =\rho\big(s_{\af_2\cdots\af_n}p_{r(\bt\af)}s_{\af_2\cdots\af_n}^*\big) 
$  where $n:=|\af|$.
With the unitary $u$  implementing the automorphism $T'$ 
(namely, $T'=Ad u$), 
this can be written as 
$$ T'(s_\af p_{r(\bt\af)}s_\af^*)
= u(s_\af p_{r(\bt\af)}s_\af^*)u^*
= s_{\af_2\cdots\af_n}p_{r(\bt\af)}s_{\af_2\cdots\af_n}^*.$$ 
Particularly, 
\begin{eqnarray} \label{eqn-t_a}
up_{r(\bt)}u^*=u\big(\sum_{a\in \CA}s_ap_{r(\bt a)} s_a^*\big)u^*= 
\sum_{a\in \CA} p_{r(\bt a)}  
\end{eqnarray}
holds. 
To find a desired isomorphism, we will find a representation of the labeled 
space $(E_{\mathbb Z}, \CL_\om,\bE_{\mathbb Z})$ in the crossed product, and then 
apply the universal property of the $C^*$-algebra 
$C^*(E_{\mathbb Z}, \CL_\om,\bE_{\mathbb Z})$. 
  We actually show that the following partial isometries  
  $$t_a:=u^*p_{r(a)},\ \  a\in \CA$$  
  in the crossed products 
  $C^*(E_{\mathbb Z},\CL_\om,\bE_{\mathbb Z})^\gm\times_{T'} \mathbb Z$ 
  form a representation of $(E_{\mathbb Z},\CL_\om,\bE_{\mathbb Z})$ 
  together with the family of projections $\{p_A : A\in \bE_{\mathbb Z})$.
By (\ref{eqn-t_a}), 
$t_a^* t_a=p_{r(a)}$ and $t_a^* t_b=\delta_{a,b} p_{r(a)}$  
are immediate for  $a,b\in \CA$. 
We also have 
\begin{align*}
 p_{r(\bt)}t_a & =p_{r(\bt)}u^*p_{r(a)}
  = u^*\big(\sum_{b\in \CA} p_{r(\bt b)}\big) p_{r(a)}\\ 
  & = u^* p_{r(\bt a)} =u^* p_{r(a)} p_{r(\bt a)} 
  = t_a p_{r(\bt a)} \\ 
  & =t_a p_{r(r(\bt), a)}. 
\end{align*}      
Since every $A\in \bE_{\mathbb Z}$ can be written as 
a finite union of generalized vertices (by Proposition~\ref{prop-barE}) 
and a generalized vertex $[v]_l$ is clearly equal to 
a range $r(\bt)$ of $\bt\in \CL_\om(E^lv)$,  
we know that  
the above equalities hold for any $A\in\bE_{\mathbb Z}$. 
 Finally we have to check 
$$p_{r(\bt)}= \sum_{a\in \CA} t_a p_{r(\bt a)} t_a^*,$$
 but this follows  directly  from the definition of $t_a$ and (\ref{eqn-t_a}). 
Thus $\{t_a, p_A\}$ forms a representation of the labeled space 
$(E_{\mathbb Z}, \CL_\om, \bE_{\mathbb Z})$ in the $C^*$-algebra
$C^*(E_{\mathbb Z}, \CL_\om,\bE_{\mathbb Z})^\gm\rtimes_{T'} \mathbb Z$, 
and hence 
there exists a homomorphism 
$$\pi:  C^*(E_{\mathbb Z}, \CL_\om,\bE_{\mathbb Z})\to 
C^*(E_{\mathbb Z}, \CL_\om,\bE_{\mathbb Z})^\gm\rtimes_{T'} \mathbb Z$$ 
such that 
$\pi(s_a)=t_a$  and $\pi(p_A)=p_A$ ($a\in \CA$, $A\in \bE_{\mathbb Z}$).  
The homomorphism $\pi$ is injective since   
$C^*(E_{\mathbb Z}, \CL_\om,\bE_{\mathbb Z})$ is simple 
by Theorem~\ref{main thm 1}, 
and is surjective since $u^*=u^*(\sum_{a\in \CA}p_{r(a)})= 
\sum_{a\in \CA} t_a \in  {\rm Im}(\pi)$ and  
$s_{\af}p_{r(\bt\af)}s_{\af}^*=(u^*)^{|\af|}p_{r(\bt\af)}u^{|\af|}\in {\rm Im}(\pi)$ 
for all generators $s_{\af}p_{r(\bt\af)}s_{\af}^*$ of 
$C^*(E_{\mathbb Z}, \CL_\om,\bE_{\mathbb Z})^\gm$.

For the last assertion, it is enough to see that 
for $\af,\bt\in \CL_\om^\sharp (E_\mathbb Z)$,
$$\pi(s_\af p_{r(\bt\af)} s_\af^*)=s_\af p_{r(\bt\af)} s_\af^*.$$  
If $a\in \CA$, then $\pi(p_{r(a)})=\pi(s_a^* s_a)=t_a^* t_a
=p_{r(a)}u u^* p_{r(a)}=p_{r(a)}$, and hence 
$\pi(p_{r(\af)})= p_{r(\af)}$ holds for all $\af\in \CL_\om^\sharp (E_\mathbb Z)$.
The equality (\ref{T'}) shows that the inverse $(T')^{-1}$ of 
the automorphism $T'$ on  $C^*(E_{\mathbb Z}, \CL_\om,\bE_{\mathbb Z})^\gm$ 
maps a projections $p_{r(\af)}$ to the  projection
$s_a p_{r(\af)} s_a^*$,
where 
$a\in \CA$ is the last letter of $\af$. 
(If $\af=\epsilon$  is the empty word, 
$p_{r(\epsilon)}=s_\epsilon$ is the unit of 
the labeled graph $C^*$-algebra, hence 
$(T')^{-1}(p_{r(\epsilon)})=p_{r(\epsilon)}= 
s_{\epsilon}p_{r(\epsilon)}s_\epsilon^*$ also holds.)
Then for $\af=\af'a$ with 
$\af'\in \CL_\om^\sharp (E_\mathbb Z)$, $a\in \CA$, 
we have 
$$
\pi(s_a p_{r(\af)} s_a^*) =t_a p_{r(\af)} t_a^* = u^* p_{r(\af)} u  
 = (T')^{-1} (p_{r(\af)})  = s_a p_{r(\af)} s_a^*
$$
as desired. 
\end{proof}  

\vskip 1pc

\end{document}